\newcommand{\defeq}{\vcentcolon=}
\renewcommand{\paragraph}{\@startsection%
{paragraph}
{4}
{0pt}
{\bigskipamount}
{-0.5em}
{\normalfont\normalsize\bfseries}}
\newtheoremstyle{thm}
{6pt plus 1pt minus 1pt}
{6pt plus 1pt minus 1pt}
{\slshape}
{}
{\scshape}
{.}
{.5em}
{}
\newtheoremstyle{thm*}
{6pt plus 1pt minus 1pt}
{6pt plus 1pt minus 1pt}
{\slshape}
{}
{\scshape}
{.}
{.5em}
{\thmname{#1}\thmnote{ #3}}
\newtheoremstyle{def}
{6pt plus 1pt minus 1pt}
{6pt plus 1pt minus 1pt}
{}
{}
{\scshape}
{.}
{.5em}
{}
\theoremstyle{thm}
\newtheorem{theorem}{Theorem}[section]
\newtheorem{proposition}[theorem]{Proposition}
\newtheorem{lemma}[theorem]{Lemma}
\theoremstyle{thm*}
\newtheorem*{theorem*}{Theorem} 
\theoremstyle{def}
\newtheorem{example}[theorem]{Example}
\newtheorem{definition}[theorem]{Definition}
\newtheorem{remark}[theorem]{Remark}
\newcommand{\R}{\mathbb{R}}
\newcommand{\C}{\mathbb{C}}
\newcommand{\Z}{\mathbb{Z}}
\DeclareMathOperator{\T}{\mathit{T}}
\newcommand{\Int}{\mathop{\mathrm{Int}}\nolimits}
\newcommand{\Bd}{\partial}
\renewcommand{\:}{\,{:}\;}
\newcommand{\hT}{\mathop{{}^{h}T}\nolimits}
\newcommand{\id}{\mathop{\mathrm{id}}\nolimits}
\begin{document}
\title[Pseudoconcave boundary of a complex surface]{On the strongly pseudoconcave boundary\\[2pt]
of a compact complex surface}

\subjclass[2010]{Primary 32Q55, 57R17; Secondary 32T15, 57R65}
\keywords{Contact structure, complex structure, pseudoconcave boundary, plurisuperharmonic function, holomorphic handle}

\author[N. Kasuya]{Naohiko Kasuya}

\address{Department of Mathematics, Faculty of Science, Kyoto Sangyo University,  
Kamigamo Motoyama, Kita-ku, Kyoto 603-8555 Japan.}

\email{nkasuya@cc.kyoto-su.ac.jp}

\author[D. Zuddas]{Daniele Zuddas}

\address{Dipartimento di Matematica e Geoscienze, Università di Trieste, Via Valerio 12/1, 34127 Trieste, Italy.}

\email{dzuddas@units.it}

\begin{abstract}
In this paper, we establish the method of holomorphic handle attaching to the strongly pseudoconcave boundary of a complex surface. 
We use this for proving the following statements: 
(1) every closed connected oriented contact $3$-manifold can be filled as the strongly pseudoconcave boundary of a compact complex surface; 
(2) any two closed connected oriented contact $3$-manifolds are complex cobordant. Moreover, we show that such complex surface (or complex cobordism) can be taken Kähler.
\end{abstract}

\maketitle

\section{Introduction}
Let $(M, \xi)$ be a closed connected positively oriented contact 3-manifold, and suppose that the contact structure $\xi$ is the kernel of a global contact 1-form $\alpha$. An important question in contact topology is whether $(M, \xi)$ is \textsl{holomorphically fillable}, namely the problem of understanding whether $(M, \xi)$ can be realized as the strongly \textsl{pseudoconvex} oriented boundary of a compact complex surface $V$, so that $\xi$ coincides with the complex tangency distribution, namely the maximal $J$-invariant distribution in $\Bd V$, where $J$ denotes the complex structure of $V$. By a result of Bogomolov and De Oliveira \cite{BO97}, a holomorphic filling can be always modified into a Stein filling of the same contact 3-manifold. On the other hand, the existence of a Stein filling implies that the given contact 3-manifold $(M, \xi)$ is tight. Then, there is an obstruction for the existence of a holomorphic filling.

A related question is whether $(M, \xi)$ can be filled as the strongly \textsl{pseudoconcave} boundary of a compact complex surface, and try to understand if this kind of filling is obstructed as well. In this paper we will show that there are no obstructions, namely a concave filling always exists.

We recall that an oriented real hypersurface $\Sigma$ in a complex $n$-manifold $W$, $n \geq 2$, is said to be \textsl{strongly pseudoconvex} if there is a smooth regular strictly plurisubharmonic function $\phi \: U \to \R$ defined in a neighborhood $U$ of $\Sigma$ in $W$, such that $\Sigma = \phi^{-1}(0)$ and $\Sigma$ is oriented as the boundary of the sublevel of $\phi$. The oriented hypersurface $\Sigma$ is said to be \textsl{strongly pseudoconcave} if it becomes strongly pseudoconvex by reversing its orientation.

Strong pseudoconvexity (resp. pseudoconcavity) of $\Sigma $ is equivalent to strong pseudoconvexity (resp. pseudoconcavity) of the CR structure $\T^{(1,0)}\Sigma$. 
This in turn is equivalent to the fact that the holomorphic tangent space of $\Sigma$, namely $\hT \Sigma \defeq \T\Sigma \cap J(\T \Sigma) $, is a contact structure on $\Sigma$ satisfying a certain  condition on the orientations that discriminates pseudoconvexity from pseudoconcavity, as one can easily see by considering $\hT \Sigma$ as the kernel of the 1-form $(d\phi \circ J)_{|\T\Sigma}$.

Let $V$ be a compact complex $n$-manifold with smooth boundary $\Bd V$, 
and let $M$ be one of its boundary components. 
Then, by definition, $V$ is contained in a larger complex $n$-manifold $V'$ without boundary, and so $M$ is a smooth real hypersurface in $V'$. 
Thus, according with the above definitions, it makes sense of speaking about strong pseudoconvexity or pseudocancavity of $M$ as a boundary component of $V$, since this does not depend on how $V$ is holomorphically embedded in $V'$.

\begin{definition}
A contact manifold $(M,\xi)$ is said to be \textsl{oriented} if both the underlying manifold $M$ and the contact distribution $\xi$ are oriented. In this case, $\xi$ is cooriented and so it is the kernel of a global contact form.
\end{definition}
Notice that if $V$ is a complex manifold with strongly pseudoconvex or pseudoconcave boundary, then $(\Bd V, \hT \Bd V)$ has a preferred orientation as contact manifold: $V$ is oriented as the boundary of $V$, and the contact distribution $\hT \Bd V$ on $\Bd V$ is oriented by the complex structure of $V$.

\begin{definition}
A \textsl{convex holomorphic filling} of a closed oriented contact manifold $(M, \xi)$ 
is a compact complex manifold $V$ such that $(\Bd V, \hT \Bd V)$ is strongly pseudoconvex and contactomorphic to $(M, \xi)$. 
Similarly, $V$ is a \textsl{concave holomorphic filling} of $(M, \xi )$ if $(\Bd V, \hT \Bd V)$ is  strongly pseudoconcave and contactomorphic to $(-M, \xi)$, where $-M$ denotes $M$ with the reversed orientation. 
\end{definition}

\begin{definition}
A \textsl{convex complex cobordism} from an oriented contact manifold $(M_1, \xi _1)$ to an oriented contact manifold $(M_2, \xi _2)$ 
is a compact connected complex manifold $V$ such that the followings hold:
\begin{enumerate}
\item $\Bd V$ is the disjoint union of two closed not necessarily connected manifolds $N_1$ and $N_2$;
\item $N_1$ is strongly pseudoconcave and $N_2$ is strongly pseudoconvex;
\item $(N_1, \hT N_1)$ is contactomorphic to $(-M_1, \xi_1)$ and $(N_2,\hT N_2)$ is contactomorphic to $(M_2, \xi_2)$.
\end{enumerate}
In this case, $N_1$ is called the concave boundary and $N_2$ is called the convex boundary of $V$.
We say also that $V$ is a \textsl{concave complex cobordism} from $(-M_2, \xi _2)$ to $(-M_1, \xi _1)$. 
\end{definition}

If $(M,\xi)$ is a contact manifold such that $\dim M \equiv 3 \pmod4$, then $\xi$ determines a preferred orientation on $M$, which is induced by the volume form $\alpha \wedge (d \alpha)^{2k+1}$, where $\alpha$ is any contact form defining $\xi$, and $\dim M = 4k+3$. If in addition $(M,\xi)$ is endowed with any orientation as a contact manifold, then we say that $\xi$ is a \textsl{positive contact structure}, and that $(M,\xi)$ is a \textsl{positive contact manifold}, if the given orientation of $M$ agrees with the preferred one. For a complex manifold $V$ of even complex dimension, a boundary component $N \subset \Bd V$ is strongly pseudoconvex (resp. strongly pseudoconcave) if and only if the contact structure $\hT N$ is positive (resp. negative).

Now, we consider contact 3-manifolds. Firstly, note that convex or concave holomorphic fillings are only defined for positive contact 3-manifolds. However, convex complex cobordisms are only defined between positive contact 3-manifolds, while the concave ones are defined between negative contact 3-manifolds. This discrepancy is due to the fact that we are mostly interested in positive contact 3-manifolds, so for them we define holomorphic fillings of the above types, while for cobordisms we prefer to keep the usual orientation conventions.

\begin{remark}
Eliashberg \cite{El85} proved that any positive contact $3$-manifold is complex cobordant to itself. 
However, two strongly pseudoconvex CR $3$-manifolds with the same underlying contact structure are not necessarily complex cobordant (see \cite{Ro65}). 
Hence, a complex cobordism from $(M_1, \xi _1)$ to $(M_2, \xi _2)$ and a complex cobordism from $(M_2, \xi _2)$ to $(M_3, \xi _3)$ cannot necessarily be glued to give a complex cobordism from $(M_1, \xi _1)$ to $(M_3, \xi _3)$. So, at this point is not clear whether complex cobordism is a transitive relation between closed connected positive contact 3-manifolds.  Theorem \ref{main} below demonstrates that this relation is actually trivial.
\end{remark}

We are now ready to state our main theorems.

\begin{theorem}\label{main2}
Any closed connected positive contact $3$-manifold admits infinitely many pairwise inequivalent concave holomorphic fillings. Moreover, both Kähler and non-Kähler concave holomorphic fillings exist for any contact 3-manifold.
\end{theorem}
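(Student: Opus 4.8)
The plan is to realize every closed connected positive contact $3$-manifold as a strongly pseudoconcave boundary by a constructive, surgery-theoretic argument built on the holomorphic handle attaching technique developed in the previous sections. Since a boundary component of a complex surface is strongly pseudoconcave exactly when its contact structure is negative, and since $(-M,\xi)$ is negative whenever $(M,\xi)$ is positive, it suffices to produce, for each positive $(M,\xi)$, a compact complex surface $V$ whose pseudoconcave boundary is contactomorphic to $(-M,\xi)$. I would organize the construction around a fixed \emph{model piece} realizing the standard sphere, together with a sequence of holomorphic handles realizing an arbitrary contact surgery presentation of the target.

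For existence, I would start from the theorem of Ding and Geiges that every closed contact $3$-manifold is obtained from the standard tight sphere $(S^3,\xi_{\mathrm{std}})$ by a finite sequence of contact $(\pm1)$-surgeries along a Legendrian link. As model piece I would take $V_0 = \CP^2 \setminus \Int B$ for a small round ball $B$: since $B$ is strictly pseudoconvex, $V_0$ is a Kähler surface (with the restricted Fubini--Study metric) whose pseudoconcave boundary is the standard sphere with the orientation we need. I would then apply the handle attaching technique to realize each contact surgery as a holomorphic handle glued to the pseudoconcave boundary, obtaining at each stage a compact complex surface whose new pseudoconcave boundary is the surgered contact manifold. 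Carrying out all the surgeries of a chosen presentation of $(-M,\xi)$ produces a concave holomorphic filling $V$ of $(M,\xi)$.

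The crucial and delicate point — on which the whole argument rests — is that the handle attaching must realize \emph{both} signs of contact surgery, and in particular those surgeries producing overtwisted or otherwise non-fillable contact structures; this is exactly what distinguishes the concave side from the convex one, where Stein-fillability would force tightness. Concretely, the hard part is to extend the complex structure over each handle so that the enlarged boundary is again pseudoconcave and carries \emph{precisely} the surgered contact structure, with enough control on the contactomorphism type that the final boundary is genuinely $(-M,\xi)$ and not merely the correct underlying $3$-manifold equipped with some other contact structure. Granting the handle attaching machinery of the earlier sections, this control is available, and the flexibility of the pseudoconcave (``anti-Stein'') condition is what removes every obstruction.

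For the infinitely many pairwise inequivalent fillings, I would fix one filling $V$ and modify its interior away from a collar of $\Bd V$: blowing up $k$ interior points yields complex surfaces $V_k$ all sharing the same pseudoconcave boundary $(-M,\xi)$, but with Euler characteristic (equivalently second Betti number $b_2$, or $b_2^-$) strictly increasing in $k$; hence the $V_k$ are pairwise non-diffeomorphic and \emph{a fortiori} inequivalent as concave holomorphic fillings. Finally, for the Kähler/non-Kähler dichotomy I would argue through the surface criterion that $b_1$ is even if and only if the surface is Kähler: performing the surgeries and blow-ups within the Kähler category, starting from the projective model $V_0$, produces the Kähler fillings, while replacing the model piece by a non-Kähler surface realizing the same standard pseudoconcave sphere (or inserting in the interior a single holomorphic modification that flips the parity of $b_1$) yields, by the same criterion, a non-Kähler filling with unchanged pseudoconcave boundary. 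I expect the verification that this last non-Kähler modification is compatible with the complex structure and leaves the boundary contact manifold fixed to be the only other point requiring genuine care.
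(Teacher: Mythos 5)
Your existence argument coincides with the paper's: remove a strongly pseudoconvex ball from a compact complex surface to get a concave filling of the standard $S^3$, then realize a Ding--Geiges presentation (Theorem \ref{DG}) by repeated holomorphic handle attachments (Theorem \ref{cobordism}, Proposition \ref{concave filling}); note only the small orientation slip that one uses a presentation of $(M,\xi)$, not of $(-M,\xi)$, the reversal being already built into the definition of concave filling. Your blow-up argument for infinitely many inequivalent fillings is a legitimate variant of the paper's (which instead varies the closed surface $S$): blowing up interior points leaves a collar of the boundary untouched, increases $b_2$, and for the Kähler case is compatible with Kählerness since the modification is local; distinguishing the fillings by $b_2$ is sound.

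The genuine gap is in the Kähler/non-Kähler dichotomy, on two counts. First, the criterion ``Kähler if and only if $b_1$ is even'' (Kodaira, Siu, Buchdahl, Lamari) is a theorem about \emph{closed} complex surfaces; a concave holomorphic filling is a compact complex surface \emph{with boundary}, where the criterion simply fails --- for instance $\Delta \times A$, with $A$ an annulus, is Stein, hence Kähler, and has $b_1 = 1$. So neither ``flipping the parity of $b_1$ by an interior modification'' nor replacing the model piece certifies non-Kählerness this way; moreover blow-ups do not change $b_1$ at all. The paper's argument instead takes $S$ a Hopf surface: since $b_2(S)=0$, the elliptic curve $C \subset S$ is null-homologous in $S - \Int B^4$, and it remains null-homologous after every handle attachment (the $3$-chain it bounds persists in the enlarged surface), so $\int_C \omega = 0$ for any closed $2$-form, contradicting $\int_C \omega > 0$ for a Kähler form; Stokes, not $b_1$, is the obstruction. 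Second, your assertion that the surgeries can be performed ``within the Kähler category'' is exactly the nontrivial content of the paper's Lemma \ref{Kähler/thm}, which is \emph{not} part of the handle attaching machinery of Section \ref{HHA}: one must extend the Kähler form over each thin handle $H_a$, which the paper does by taking a Kähler potential $h$ on a Stein neighborhood of the attaching circle, correcting it to $h + k\log|z_1|$ (harmonicity of $\log|z_1|$ keeps $\partial\bar\partial$ unchanged), cutting off by a convex function, taking a maximum with the small strictly plurisubharmonic function $b\,(|z_1|^2+|z_2|^2)$, and smoothing via Richberg's theorem. Without this lemma (or an equivalent extension argument) the Kähler half of your statement is asserted rather than proved.
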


\begin{theorem}\label{main}
Let $(M_1, \xi _1)$ and $(M_2, \xi _2)$ be any two closed conneted positive contact $3$-manifolds. 
Then, there exists a Kähler convex complex cobordism from $(M_1, \xi _1)$ to $(M_2, \xi _2)$.
\end{theorem}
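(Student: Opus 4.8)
The plan is to build the cobordism as a single nested sequence of holomorphic handle attachments performed on one evolving pseudoconcave boundary, so that on the contact level we realize a chain of contact surgeries connecting $(M_2,\xi_2)$ to $(M_1,\xi_1)$. First I would invoke Theorem \ref{main2} to fix a Kähler concave holomorphic filling $W$ of $(M_2,\xi_2)$, so that $(\Bd W,\hT\Bd W)$ is strongly pseudoconcave and contactomorphic to $(-M_2,\xi_2)$. This filling serves only to supply a strongly pseudoconcave boundary on which the handle calculus can be started; the construction will take place entirely in a collar of $\Bd W$ and in the attached handles, while a copy of $\Bd W \cong (M_2,\xi_2)$ will survive as the convex end of the cobordism.

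Next, by the Ding--Geiges surgery presentation every closed contact $3$-manifold is obtained from $(S^3,\xi_{std})$ by contact $(\pm1)$-surgeries along a Legendrian link, and each such surgery is invertible; hence there is a finite sequence of contact $(\pm1)$-surgeries transforming $(-M_2,\xi_2)$ into $(-M_1,\xi_1)$. I would then realize each of these surgeries by attaching a holomorphic handle to the current strongly pseudoconcave boundary, using the handle-attaching method established earlier in the paper. The trace of one such attachment is a Kähler complex surface with two boundary components: the previous boundary, which now appears as a strongly pseudoconvex hypersurface, and the new boundary, which is again strongly pseudoconcave and carries the surgered contact structure. Performing the attachments successively on the boundary that keeps changing, and collecting all the traces together with a collar of $\Bd W$, assembles a compact connected Kähler complex surface $V$ whose boundary is the disjoint union of a strongly pseudoconvex component contactomorphic to $(M_2,\xi_2)$ and a strongly pseudoconcave component contactomorphic to $(-M_1,\xi_1)$; adding index-$1$ handles if necessary keeps $V$ connected. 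By definition this is a Kähler convex complex cobordism from $(M_1,\xi_1)$ to $(M_2,\xi_2)$.

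It is worth stressing why this argument escapes the difficulty raised in the Remark above, namely that complex cobordisms cannot in general be glued. The point is that $V$ is not assembled by matching two independently chosen CR fillings along a common contact boundary; rather, it is built as a single nested sequence of handle attachments on one pseudoconcave boundary, so that the complex structure extends automatically across every intermediate level and no compatibility of competing CR structures need ever be arranged. The main obstacle is therefore concentrated entirely in the handle-attaching step: one must show that attaching a holomorphic handle along a Legendrian knot in a strongly pseudoconcave boundary realizes \emph{both} the $(+1)$- and the $(-1)$-contact surgery, and does so while keeping the ambient structure Kähler and preserving pseudoconcavity of the outgoing boundary. Once this is granted --- it is exactly the technical core of the paper --- what remains is the orientation and contact-type bookkeeping needed to verify that the two ends of $V$ have precisely the types and the contactomorphism classes demanded by the definition of a convex complex cobordism.
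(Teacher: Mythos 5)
Your proposal is correct and takes essentially the same route as the paper: start from a Kähler concave holomorphic filling of $(M_2,\xi_2)$ (Theorem \ref{main2}), use the Ding--Geiges presentation together with the invertibility of contact $(\pm 1)$-surgeries to connect the two contact manifolds by a surgery sequence, and realize that sequence by repeated application of Theorem \ref{cobordism}, with Lemma \ref{Kähler/thm} guaranteeing the Kähler extension over each handle. Your additional remarks --- that the nested handle attachments avoid the non-gluability of complex cobordisms, and that connectedness is automatic (so the index-$1$ handles you mention are never needed, which is fortunate since the paper only establishes holomorphic $2$-handle attaching) --- are sound elaborations of the same argument.
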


Our approach is based on a variation of Eliashberg's construction of Stein manifolds, 
and moreover we establish the method of holomorphic handle attaching 
to the strongly pseudoconcave boundary of a complex surface.

In \cite{El90}, Eliashberg described the handlebody construction of Stein manifolds. 
Roughly speaking, his construction consists of two parts. 
First, he took the standard holomorphic $k$-handle of complex dimension $n$, whose core is a $k$-dimensional totally real disk, 
and showed that if $k \leq n$ this handle can be attached analytically along an \textsl{isotropic} $(k-1)$-sphere embedded in the convex boundary of a given complex $n$-manifold endowed with a certain strictly plurisubharmonic function.
Next, he extended the original strictly plurisubharmonic function over the handle.

In partial contrast, our proof of Theorem \ref{main2} goes as follows (for $k= n=2$). 
First, we take the standard holomorphic handle of index $2$ whose core is a \textsl{holomorphic} disk, 
and attach it along a transverse knot in the strongly pseudoconcave boundary of a given complex surface endowed with a certain strictly plurisuperharmonic function (Proposition \ref{attach}).
Next, we modify the original function to obtain a new strictly plurisuperharmonic function 
whose level set defines the new concave boundary (Proposition \ref{concave boundary}). 
The point is that we do not have to extend the function over the whole handle, 
because we need a function only near the boundary.

If the attaching circle is the positive push-off of a Legendrian knot $K$,  
then the effect on the contact structure on the concave boundary is a contact surgery along $K$. 
Since the attaching circle is a transverse knot, the framing can be chosen arbitrarily. 
In particular, both contact $(\pm 1)$-surgeries can be realized (Theorem \ref{cobordism}). 
This means that holomorphic handle attaching to the concave boundary of a complex surface is unrestricted, and then we are able to obtain all closed positive contact 3-manifolds. Moreover, we show that this handle operation preserves the property of being a Kähler complex surface.

Then, we are left to produce a compact complex surface with strongly pseudoconcave boundary to start with. This may be done by removing the interior of a strongly pseudoconvex $4$-ball from a compact complex surface. Then, the complement is a concave holomorphic filling of the standard contact $3$-sphere.

In \cite{DKZ17}, Antonio J. Di Scala and we constructed the first examples of non-K\"{a}hler complex structures on $\R^4$.
We denote such a complex surface by $E$.
It is non-K\"{a}hler because it is diffeomorphic to $\R^4$ and contains compact elliptic curves (see also \cite{DKZ18} for some further properties of these surfaces).
We recall the notion of Calabi-Eckmann type complex manifold introduced in \cite{DKZ17}. 

\begin{definition}
A complex manifold $Y$ is said to be of Calabi-Eckmann type if there exist a closed complex manifold $X$ of positive dimension, and a holomorphic immersion $k \: X \to Y$ which is null-homotopic as a continuous map. 
\end{definition}

Notice that a Calabi-Eckmann type complex manifold is non-K\"{a}hler by Stokes' theorem. \pagebreak
The next theorem has been proven in \cite{KZ18} by taking a certain 4-ball embedded in the complex surface $E$, with the induced complex structure. 

\begin{theorem}[\cite{KZ18}]\label{4-ball}
The 4-ball $B^4$ admits a complex structure of Calabi-Eckmann type with strongly pseudoconcave boundary. 
Moreover, the negative contact structure on $\Bd B^4=S^3$ is overtwisted and homotopic as a plane field 
to the positive standard contact structure on $S^3$. 
\end{theorem}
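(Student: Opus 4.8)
The plan is to realize the required $4$-ball as a round region of the complex surface $E$ of \cite{DKZ17}. Recall that $E$ is diffeomorphic to $\R^4$, carries a complex structure $J$, and contains a compact elliptic curve $C$. Fixing a diffeomorphism $E \cong \R^4$, I would consider the standard round balls $B_r$ centred so that $C \subset \Int B_r$ for all large $r$, and set $S^3 = \Bd B_r$. There are then three things to establish: that $(B_r, J)$ is of Calabi-Eckmann type, that $S^3$ is strongly pseudoconcave, and that the induced plane field $\hT S^3$ has the stated homotopy type and is overtwisted. The Calabi-Eckmann property is immediate once $C \subset \Int B_r$: the inclusion $C \hookrightarrow B_r$ is a holomorphic immersion of a positive-dimensional closed complex manifold, and it is null-homotopic because $B_r$ is contractible.

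For pseudoconcavity I would construct a strictly plurisuperharmonic defining function only on a collar of $S^3$, as the excerpt explicitly permits. Note that no such function can exist on all of $B_r$: its restriction to the compact curve $C$ would be strictly superharmonic, which is impossible on a closed Riemann surface by Stokes' theorem. Near $\Bd B_r$, however, the curve plays no role, so I would study the complex Hessian of $\phi = |z|^2$ (or a radial modification of it) with respect to $J$ on the end of $E$, and choose $r$ large enough that $-\phi$ is strictly plurisubharmonic there. With the boundary orientation of $B_r$, this exhibits $S^3$ as a strongly pseudoconcave boundary, so that $\hT S^3$ is the negative contact structure of the statement.

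The homotopy type of the plane field is then a short computation. By definition $\hT S^3$ is the field of $J$-complex tangencies of $\Bd B_r$, so it bounds the almost complex manifold $(B_r, J)$, and by Gompf's formula its three-dimensional invariant is
\[
d_3(\hT S^3) = \tfrac14\bigl(c_1(B_r, J)^2 - 3\sigma(B_r) - 2\chi(B_r)\bigr).
\]
Since $B_r$ is contractible, $H^2(B_r) = 0$, so $c_1^2 = 0$, while $\sigma(B_r) = 0$ and $\chi(B_r) = 1$, whence $d_3 = -\tfrac12$. The identical computation for the standard ball $B^4 \subset \C^2$ gives the same value $-\tfrac12$ for the positive standard contact structure $\xi_{\mathrm{std}}$. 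As $d_3$ is a complete invariant of oriented $2$-plane fields on $S^3$, the two fields are homotopic, which is the second assertion.

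The main obstacle is overtwistedness, precisely because the homotopy invariant cannot detect it: by Eliashberg's classification the homotopy class of $\xi_{\mathrm{std}}$ contains both the unique tight structure and an overtwisted one, so the $d_3$ computation above is consistent with either. Here I would argue directly, analysing the characteristic foliation on $S^3$ induced by $\hT S^3$ and exhibiting an overtwisted disk, equivalently a Legendrian unknot $L$ with $\mathrm{tb}(L) \ge 0$. The feature that should make this possible is exactly the non-Kähler, Calabi-Eckmann nature of the filling: the compact curve $C$, being null-homologous in $B_r$ and hence of trivial normal bundle, forces $J$ to twist near the concave end, and this twisting is what produces the disk. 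Pinning down such a disk explicitly, by relating the contact framing of the candidate knot to the normal data of $C$ through the detailed description of $E$ in \cite{DKZ17, KZ18}, is the delicate point of the argument.
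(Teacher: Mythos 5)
Your overall plan matches the source construction in outline --- the ball is indeed taken inside the surface $E$ of \cite{DKZ17} with the induced complex structure, the Calabi--Eckmann property does follow immediately from contractibility of the ball together with the compact curve $C$, and the $d_3$-computation via Gompf's formula (which, as you use correctly, needs only an almost complex coboundary, not convexity) is the right way to get the homotopy statement. But the two claims that carry the theorem are not established, and one of them fails for a reason you could have avoided.

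First, pseudoconcavity. The identification $E\cong\R^4$ is a bare diffeomorphism, so the function $|z|^2$ (or any radial modification of it) bears no relation to $J$: its complex Hessian in such coordinates is completely uncontrolled, and there is no reason whatsoever that $-|z|^2$ should become strictly plurisubharmonic near $\Bd B_r$ for $r$ large, nor that \emph{any} round sphere in these coordinates is pseudoconcave. Strong pseudoconcavity of a specific boundary is precisely the hard analytic content of the theorem; in \cite{KZ18} it is obtained not for an arbitrary large ball but for a \emph{certain} carefully chosen 4-ball adapted to the explicit charts in which $E$ is constructed, where the boundary hypersurface can be checked to be strongly pseudoconcave by direct Hessian computations in the spirit of Proposition \ref{classical}. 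Writing ``choose $r$ large enough'' simply assumes the conclusion. (Your observation that no global strictly plurisuperharmonic function can exist, because of $C$, is correct, but it does not help produce the collar function.)

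Second, overtwistedness. You defer it as the delicate point, but your meta-claim that the homotopy invariant cannot detect it is wrong in this setting, and this is where the proposal misses the argument that is actually available. The structure $\hT S^3$ is a \emph{negative} contact structure. By Eliashberg's uniqueness of the tight contact structure on $S^3$ (which applies in either orientation), the unique tight negative structure is the mirror of $\xi_{\mathrm{std}}$; it arises as the field of complex tangencies on $\Bd(\CP^2-\Int B^4)$, so Gompf's formula gives it $d_3=(9-3\cdot 1-2\cdot 2)/4=1/2\neq -1/2$. Hence a negative contact structure lying in the plane-field class of $\xi_{\mathrm{std}}$, i.e.\ with $d_3=-1/2$ --- which is exactly what your computation shows --- cannot be tight, and overtwistedness follows from the homotopy statement with no further work. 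By contrast, the route you sketch --- extracting an overtwisted disk from ``twisting of $J$ forced by the null-homologous curve $C$'' --- is not an argument: $C$ lies in the interior far from the boundary, its trivial normal bundle says nothing about the characteristic foliation on $\Bd B_r$, and no mechanism connecting the two is given. So as written, the proposal proves the Calabi--Eckmann property, leaves pseudoconcavity unproven (and unprovable by the stated method), and leaves overtwistedness open even though it is a soft consequence of the $d_3$-computation you already performed.
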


Applying the holomorphic handle attaching method (Theorem \ref{cobordism}) to this example, we actually obtain the following result.

\begin{theorem}\label{CE}
Every closed conneted positive contact $3$-manifold admits a concave holomorphic filling of Calabi-Eckmann type.
\end{theorem}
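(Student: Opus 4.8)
The plan is to combine the two main building blocks already established in the paper: the starting example of Theorem \ref{4-ball}, which provides a Calabi-Eckmann type complex structure on $B^4$ with strongly pseudoconcave boundary, and the holomorphic handle attaching method of Theorem \ref{cobordism}, which realizes arbitrary contact surgeries along Legendrian knots in the concave boundary while preserving the relevant complex-geometric structure. The key observation is that the Calabi-Eckmann property is a \emph{local} phenomenon in the following sense: it is witnessed by a single holomorphic immersion $k \: X \to Y$ of a closed complex manifold that is null-homotopic, and this witness lives in a fixed compact region far from the boundary where the surgeries take place.

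First, I would take $B^4$ with the complex structure furnished by Theorem \ref{4-ball} as the initial concave holomorphic filling; its boundary carries an overtwisted negative contact structure homotopic to the standard one on $S^3$, and it contains a compact complex curve $X$ (via the immersion $k$) certifying the Calabi-Eckmann type. Next, given an arbitrary closed connected positive contact $3$-manifold $(M,\xi)$, I would invoke the fact, underlying Theorem \ref{main2} and made precise through Theorem \ref{cobordism}, that $(M,\xi)$ is obtained from the concave boundary $S^3$ by a finite sequence of contact $(\pm1)$-surgeries along Legendrian knots. By Eliashberg's Legendrian surgery description of contact $3$-manifolds (equivalently, the fact that every closed contact $3$-manifold arises by such surgeries on the standard $S^3$), such a sequence always exists.

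The core of the argument is then to perform, one at a time, the holomorphic handle attachments of Proposition \ref{attach} and the subsequent reconstruction of the concave boundary via Proposition \ref{concave boundary}, as packaged in Theorem \ref{cobordism}. Each handle is glued along a transverse push-off of a Legendrian knot in a collar of the current concave boundary, so all modifications of the complex structure are confined to an arbitrarily thin neighborhood of that boundary. Since the immersed curve $k(X)$ sits in the interior, away from every attaching region, it survives unchanged through each handle attachment: the holomorphic immersion $k \: X \to Y$ persists, and its null-homotopy in the enlarged manifold is inherited because adding $2$-handles cannot destroy a null-homotopy that already exists in the original $B^4$ (indeed the inclusion of the old manifold into the new one, followed by $k$, remains null-homotopic). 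Hence the resulting compact complex surface remains of Calabi-Eckmann type, while its concave boundary is now contactomorphic to $(-M,\xi)$, i.e.\ it is a concave holomorphic filling of $(M,\xi)$.

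The main obstacle I anticipate is the bookkeeping needed to guarantee that the Calabi-Eckmann witness is genuinely undisturbed and that its null-homotopy is preserved after repeated surgery. Concretely, one must verify that the handle-attaching region in Proposition \ref{attach} can always be chosen disjoint from $k(X)$ — this follows from the locality of the construction near the boundary, but it requires checking that the strictly plurisuperharmonic function can be modified only in a collar without affecting the interior where $X$ lives. A secondary technical point is confirming that the null-homotopy of $k$ in $B^4$ remains a null-homotopy in the surgered surface; since the relevant map factors through the contractible initial $B^4$ and the inclusion into the cobordism, this is straightforward but should be stated carefully. Once these locality and homotopy-persistence facts are in place, the conclusion follows directly by combining Theorem \ref{4-ball} with Theorem \ref{cobordism}.
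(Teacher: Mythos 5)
Your proposal is correct and takes essentially the same route as the paper, which obtains Theorem \ref{CE} exactly by applying the holomorphic handle attaching method (Theorem \ref{cobordism}) to the Calabi-Eckmann 4-ball of Theorem \ref{4-ball}; the locality and null-homotopy checks you flag (the immersed curve avoids the thin collars removed at each step, and $k$ factors through the truncated, still contractible, ball inside the final filling) are precisely the details implicit in the paper's one-line derivation. One small correction: the surgery sequence from the \emph{overtwisted} boundary $S^3$ to $(M,\xi)$ rests on Ding--Geiges (Theorem \ref{DG}) together with the invertibility of contact $(\pm1)$-surgeries — not on Eliashberg — and the invertibility is genuinely needed here since the initial concave boundary is not the standard contact sphere.
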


The organization of this paper is as follows. 
In Section \ref{def}, we recall some basic definitions and results that will be useful through the paper. 
Moreover, we summarize the theory of contact Dehn surgery, 
and prepare the standard model of a concave holomorphic handle which will be used in our construction. 
In Section \ref{HHA}, we establish 
the method of holomorphic handle attaching to the concave boundary of a complex surface and we use this to prove Theorem \ref{cobordism}. 
This is the main part of our construction. 
Finally, in Section \ref{proofs}, we prove Theorems \ref{main2} and \ref{main} by using Theorem \ref{cobordism}.

\section{Preliminaries}\label{def}

\paragraph{Pseudoconvexity and pseudoconcavity}
Let $\phi \colon W\to \R$ be a smooth function on a complex manifold $W$ of complex dimension $n$. We recall that the \textsl{complex Hessian} of $\phi$ is the Hermitian form on the complex tangent bundle $\T W \otimes \C$ defined by \[H_\phi(v,w) \defeq i(\partial\bar\partial \phi)(v, J w),\] where $J$ denotes the complex structure of $W$ and $v,w \in T_p\, W \otimes \C$. In local holomorphic coordinates, $H_\phi$ can be expressed by the usual formula \[H_\phi = \sum_{\alpha, \beta = 1}^n \frac{\partial^2 \phi}{\partial z_\alpha\, \partial\bar z_\beta}\;  dz_\alpha \otimes d\bar z_\beta.\]

The function $\phi$ is said to be \textsl{strictly plurisubharmonic} 
if its complex Hessian $H_\phi$ is positive definite at every point of $W$.
If instead $H_\phi$ is negative definite at every point of $W$, namely if $-\phi$ is strictly plurisubharmonic, then $\phi$ is said to be \textsl{strictly plurisuperharmonic.}

Let $M \subset W$ be a smooth oriented real hypersurface in a complex manifold $W$. Suppose that $M$ is the zero level set of a smooth regular function $\phi \: U \to \R$, that is $M = \phi^{-1}(0)$, where $U$ is an open neighborhood of $M$ in $W$. We assume that $M$ is oriented as the boundary of the sublevel $\phi^{-1}(\mathopen]-\infty, 0\mathclose])$. 
The \textsl{Levi form} $\mathcal L_{M,\phi}$ of $M$ with respect to $\phi$ is defined as the restriction of $H_\phi$ to the subbundle $\hT M \otimes \C \subset (\!\T W \otimes \C)_{|M}$, namely $\mathcal L_{M,\phi} = H_{\phi \mid \hT\! M \otimes \C}$. 
If $\psi$ is another defining function for $M$ as above, then the corresponding Levi form $\mathcal L_{M,\psi}$ satisfies $\mathcal L_{M,\psi} = \lambda \mathcal L_{M,\phi}$ for some smooth function $\lambda \: M \to \mathopen]0,+\infty\mathclose[$. So, the Levi form can be considered up to a smooth positive coefficient, and in this sense it depends only on the hypersurface $M \subset W$, and shall be indicated by $\mathcal L_M$. Observe that $\mathcal L_{-M} = -\mathcal L_M$.

The oriented hypersurface $M$ is said to be \textsl{strongly pseudoconvex} if $\mathcal L_M$ is positive definite at every point of $M$; if instead $\mathcal L_M$ is negative definite at every point of $M$, then $M$ is said to be \textsl{strongly pseudoconcave}.

It is well-known that for a complex manifold $W$, the following three conditions are equivalent:
\begin{enumerate}
\item
$W$ is Stein;
\item
$W$ admits a proper holomorphic embedding into $\C^N$ for some $N > \dim_\C W$;
\item
$W$ admits a smooth strictly plurisubharmonic proper function. 
\end{enumerate}
This is a classical result by Remmert \cite{Re56}, Bishop \cite{Bi61}, Narashiman \cite{Na60}, and Grauert\cite{Gr58}. 
Moreover, a complete topological characterization of Stein manifolds has been done by Eliashberg \cite{El90} for $n>2$, 
and by Eliashberg \cite{El90} and Gompf \cite{Go98} for $n=2$ (see also Loi-Piergallini \cite{LP01}). 

\paragraph{Fillings and cobordisms} A vector field $v$ on a symplectic manifold $(X, \omega )$ is called a Liouville vector field if $L_{v}\omega =\omega $. 
Suppose that $X$ is compact with boundary. 
The boundary $\Bd X$ is called $\omega $-convex (resp. $\omega $-concave) 
if there exists a Liouville vector field $v$ near $\Bd X$, pointing outwards (resp. inwards) along $\Bd X$. 
In this case, $\xi =\ker (i_{v}\omega_{|\Bd X})$ defines 
a positive (resp. negative) contact structure on the boundary $\Bd X$. 

\begin{definition}\label{fillings/def}
Let $(M, \xi )$ be a closed connected oriented contact $(2n-1)$-manifold. 
\begin{enumerate}
\item
A Stein filling of $(M, \xi )$ is a compact Stein manifold 
whose boundary is strongly pseudoconvex and contactomorphic to $(M, \xi )$. 
In this case we say that $(M, \xi )$ is Stein fillable. 
\item
A strong symplectic filling (or a convex symplectic filling) of $(M, \xi )$ 
is a compact symplectic manifold $(X, \omega )$ 
whose boundary is $\omega $-convex and contactomorphic to $(M, \xi )$. 
In this case we say that $(M, \xi )$ is strongly symplectically fillable. 
\item
A weak symplectic filling of $(M, \xi )$ is a compact symplectic manifold $(X, \omega )$ 
such that $\Bd X=M$ as oriented manifolds and $(\omega_{|\xi }) ^{n-1} >0$. 
In this case we say that $(M, \xi )$ is weakly symplectically fillable. 
\item
A concave symplectic filling of $(M, \xi )$ is a compact symplectic manifold $(X, \omega )$ 
whose boundary is $\omega $-concave and contactomorphic to $(-M, \xi )$. 
\end{enumerate}
\end{definition}

\begin{definition}
Let $(M_1, \xi_1)$ and $(M_2, \xi _2)$ be closed connected oriented contact $(2n-1)$-manifolds. 
\begin{enumerate}
\item
A Stein cobordism from $(M_1, \xi_1)$ to $(M_2, \xi _2)$ is a complex cobordism $V$ from $(M_1, \xi_1)$ to $(M_2, \xi _2)$  
admitting a strictly plurisubharmonic function $\phi $ such that $M_1$ and $M_2$ are non-singular level sets of $\phi $, and satisfying $\phi(M_1) < \phi(x) < \phi(M_2)$ for all $x \in \Int V$.
In this case we say that $(M_1, \xi _1)$ is Stein cobordant to $(M_2, \xi _2)$. 
\item
A symplectic cobordism from $(M_1, \xi_1)$ to $(M_2, \xi _2)$ is 
a compact symplectic $2n$-manifold $X$ such that the boundary $\Bd X$ consists of a 
$\omega $-convex part $\Bd _{+}X$ and a $\omega $-concave part $\Bd _{-}X$ 
which are contactomorphic to $(M_2, \xi _2)$ and $(-M_1, \xi _1)$, respectively. 
In this case we say that  $(M_1, \xi_1)$ is symplectically cobordant to $(M_2, \xi _2)$. 
\end{enumerate}
\end{definition}

These fillings and cobordisms have been widely studied. 
There are several important known facts. First, the following implications are immediate for a given contact $m$-manifold:
\[\text{Stein fillable} \Rightarrow \text{strongly symplectically fillable} \Rightarrow \text{weakly symplectically fillable.}\]

It is known that these implications are not equivalences when $m=3$. 
Moreover, it has been shown that 
a weakly symplectically fillable contact $m$-manifold is tight, namely, not overtwisted, for all odd $m\geq 3$. 
For the definition of overtwistedness for $m\geq 5$, see \cite{BEM}. 
On the other hand, Etnyre and Honda \cite{EH02} have proved the following theorem. 

\begin{theorem}[Etnyre-Honda \cite{EH02}]\label{EH}
Any closed positive contact $3$-manifold admits infinitely many concave symplectic fillings.
\end{theorem}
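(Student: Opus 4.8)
The plan is to split the statement into two parts: producing a \emph{single} concave symplectic filling of an arbitrary $(M,\xi)$, and then upgrading one filling into infinitely many pairwise inequivalent ones. The second part is the easy one. Given any concave symplectic filling $(X,\omega)$ of $(M,\xi)$, I would symplectically blow up an interior point to obtain $X \# \overline{\CP}{}^2$ with a symplectic form. Since the blow-up is supported in a small ball in $\Int X$, it leaves a neighborhood of $\Bd X$ --- together with its concave (inward Liouville) structure and the induced contact boundary $(-M,\xi)$ --- completely unchanged, so the result is again a concave filling. Iterating the blow-up increases $b_2$ and the Euler characteristic without bound, so the total spaces $X \# k\,\overline{\CP}{}^2$ are pairwise non-diffeomorphic, hence pairwise inequivalent fillings. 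Thus everything reduces to the existence of one cap.

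For the existence of a single cap I would use open books. By Giroux's correspondence, $(M,\xi)$ is supported by an open book with page $\Sigma$, transverse binding $B$, and monodromy $\phi$, and I would build the cap $C$ by \emph{capping off the binding}. Concretely, capping each boundary circle of the page by a disk turns the pages into copies of a closed surface $\hat\Sigma$, and these assemble into a closed symplectically embedded surface inside $C$; equivalently $C$ is a regular neighborhood of $\hat\Sigma$, decorated with handles that record $\phi$, and satisfies $\Bd C \cong -M$. The essential point --- and the reason concave caps exist for \emph{every} contact structure, including the overtwisted ones that admit no convex filling --- is that this construction never requires $\phi$ to factor as a product of \emph{positive} Dehn twists: one fills in the binding rather than the pages, so no positivity condition is imposed.

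Concavity then comes from a self-intersection count. The surface $\hat\Sigma$ is embedded with strictly positive normal Euler number, and a symplectically embedded closed surface of positive self-intersection has a standard concave neighborhood, whose boundary carries an inward Liouville field inducing the negative (concave) contact orientation, i.e. $(-M,\xi)$. Carrying this out would proceed by placing a Thurston--Winkelnkemper-type symplectic form on the mapping-torus region of the open book, extending it over the binding-capping handles while keeping it closed and nondegenerate, and exhibiting an inward-pointing Liouville vector field near $\Bd C$ whose contact form has kernel $\xi$. \emph{The hard part will be exactly this last matching}: arranging the extended $2$-form to remain symplectic across the handles and, near the boundary, to induce precisely the given contact structure $\xi$ with the \emph{inward} Liouville direction, so that one obtains a concave rather than a convex filling. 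All of the orientation and sign bookkeeping that distinguishes $(M,\xi)$ from $(-M,\xi)$ is concentrated in this step; once it is settled, the blow-up argument of the first paragraph supplies the infinitude and completes the proof.
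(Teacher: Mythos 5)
Your route is genuinely different from the paper's. The paper does not reprove Etnyre--Honda by a direct cap construction: it derives Theorem \ref{EH} from Theorem \ref{EH2} (every closed positive contact $3$-manifold is Stein cobordant to a Stein fillable one) together with the Lisca--Mati\'c embedding theorem (Theorem \ref{LM}). Namely, a Stein filling $W$ of the fillable end $(N,\eta)$ sits as a domain $X$ in a smooth projective surface $S$, so $S-\Int X$ is a concave symplectic filling of $(N,\eta)$, and gluing it to the Stein cobordism $V$ along the contactomorphic boundaries produces a concave filling of $(M,\xi)$; the infinitude comes from varying $S$. Your plan is instead the open-book cap construction in the style of Eliashberg, Etnyre and Gay, with interior symplectic blow-ups supplying the infinitude. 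The blow-up half of your argument is correct and clean: blowing up is supported in a Darboux ball in $\Int X$, leaves the inward Liouville field near $\Bd X$ untouched, and the manifolds $X \# k\,\overline{\CP}{}^2$ are distinguished by $b_2$. If completed, your route is more elementary and purely contact-topological, avoiding Stein cobordisms and projective embeddings; what the paper's route buys in exchange is a cap with holomorphic (indeed K\"ahler) structure, which is exactly the feature the rest of the paper exploits.

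That said, as written the existence half has a genuine gap --- one you flag yourself --- and also a structural inaccuracy. First, after capping the binding circles by symplectic $2$-handles, the resulting piece is \emph{not} yet a cap and is not ``a regular neighborhood of $\hat\Sigma$ decorated with handles'': its new boundary is a $\hat\Sigma$-bundle over $S^1$ with monodromy the capped-off $\hat\phi$, and this bundle still has to be filled, standardly by a Lefschetz fibration over $D^2$ with closed fiber $\hat\Sigma$. That step requires factoring $\hat\phi$ as a product of \emph{positive} Dehn twists; for a closed surface this is always possible (the mapping class group is generated by right-handed twists), so no condition on $\xi$ arises --- but contrary to your claim, positivity does enter, it is merely automatically satisfiable once the binding is capped. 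Second, the step you defer --- extending the Thurston--Winkelnkemper form symplectically over the binding-capping handles so that the boundary stays $\omega$-concave and induces precisely $(-M,\xi)$ --- is the actual analytic content of the theorem, not a loose end: without carrying it out, or citing the cap-existence theorems of Eliashberg, Etnyre, or Gay where it is done, the proposal is a program rather than a proof. With such a citation your argument closes; without it, the concavity and the identification of the induced contact structure remain unproved.
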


This shows that for contact $3$-manifolds, concave symplectic fillings are not restrictive at all. 
In fact, they first showed the following. 

\begin{theorem}[Etnyre-Honda \cite{EH02}]\label{EH2}
Any closed positive contact $3$-manifold is Stein cobordant to a Stein fillable contact $3$-manifold. 
\end{theorem}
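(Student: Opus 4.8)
The plan is to realize the required cobordism by attaching Weinstein (i.e.\ Stein) 2-handles to a contact open book, exploiting two standard facts: Legendrian surgery along a curve lying in a page modifies the monodromy by a \emph{right-handed} Dehn twist, and an open book whose monodromy is a product of right-handed Dehn twists is automatically Stein fillable. With these in hand, the entire problem collapses to an algebraic statement in the mapping class group, namely that any monodromy becomes a product of right-handed twists after composing it with finitely many further right-handed twists.

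First I would invoke the Giroux correspondence to fix an open book decomposition $(\Sigma, \phi)$ supporting $(M, \xi)$, where $\Sigma$ is a compact surface with boundary and $\phi \in \M(\Sigma)$ is the monodromy. Since $\M(\Sigma)$ is generated by Dehn twists, and a left-handed twist is the inverse of a right-handed one, I may write
\[
\phi = \tau_{c_1} \cdots \tau_{c_p}\, \tau_{d_1}^{-1} \cdots \tau_{d_q}^{-1},
\]
where every $\tau$ denotes a right-handed Dehn twist along an embedded curve $c_i, d_j \subset \Sigma$. After a positive stabilization of the open book, if necessary, I would arrange the curves $d_j$ to lie in pages, so that each can be realized as a Legendrian knot whose page framing is the contact framing.

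Next I would attach, one at a time, a Weinstein 2-handle along each $d_j$ realized as such a Legendrian knot. Each attachment is contact $(-1)$-surgery, which simultaneously (i)~produces a Stein cobordism from the current contact manifold to the surgered one, and (ii)~composes the monodromy with the right-handed twist $\tau_{d_j}$. Performing these $q$ surgeries so that they cancel the inverse twists and then stacking the resulting Stein cobordisms yields a single Stein cobordism starting from $(M, \xi)$ whose top boundary $(M', \xi')$ is supported by the open book $(\Sigma, \phi')$ with
\[
\phi' = \tau_{c_1} \cdots \tau_{c_p},
\]
a product of right-handed Dehn twists only.

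Finally, an open book whose monodromy is a product of right-handed Dehn twists bounds a Lefschetz fibration over the disk, and therefore admits a Stein filling; hence $(M', \xi')$ is Stein fillable, and $(M,\xi)$ is Stein cobordant to it, as required. I expect the main obstacle to be the careful verification of the surgery step: one must confirm that each handle attachment is genuinely a \emph{Stein} (not merely symplectic) upward cobordism from $(M,\xi)$, that a curve lying in a page is Legendrian-realizable with surgery coefficient exactly $-1$ relative to the page framing, and that the cumulative effect on the monodromy is precisely composition with the chosen right-handed twists. Matching the orientation and framing conventions so that ``$(-1)$-surgery'' really corresponds to adding a \emph{positive} twist is the delicate bookkeeping at the heart of the argument.
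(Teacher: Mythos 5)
This statement is quoted in the paper from Etnyre--Honda \cite{EH02} and is not proved there, so there is no internal proof to compare against; what follows is an assessment of your argument on its own terms. Your proposal is a correct reconstruction of the standard proof (and essentially the original one): take a supporting open book via Giroux, factor the monodromy, kill the negative twists by Legendrian surgeries on page curves, and conclude Stein fillability of the resulting positive open book via the correspondence with Lefschetz fibrations (Loi--Piergallini \cite{LP01}, Akbulut--Ozbagci). The steps you flag as delicate are indeed the right ones, and all close in the standard way. Three points deserve explicit care. First, the Legendrian realization of a curve $c$ in a page with contact framing equal to the page framing requires $c$ to be non-isolating on the convex surface obtained from two opposite pages; a boundary-parallel or otherwise isolating curve fails this. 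This is remedied by factoring $\phi$ into Dehn twists along \emph{non-separating} curves (possible since $\M(\Sigma)$ is generated by such twists, after a positive stabilization if the page is too simple), which is the actual content behind your ``stabilize if necessary'' step. Second, the Stein structure on the cobordism: one starts from Eliashberg's complexification of the trivial cobordism $M \times [0,1]$ \cite{El85}, which carries a strictly plurisubharmonic function with the two boundary components as level sets, and each Weinstein $2$-handle attached along a Legendrian knot in the convex end extends this structure; this matches the paper's definition of Stein cobordism, in which $M_1$ and $M_2$ must be non-singular level sets. Third, the cancellation bookkeeping is exact because you may place each surgery curve $d_j$ on a page $\Sigma \times \{t_j\}$ at the position in the factorization where $\tau_{d_j}^{-1}$ occurs, so the surgery inserts $\tau_{d_j}$ at precisely that spot in the word rather than merely composing on one side. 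With these points made precise, the argument is complete; note also that only the \emph{existence} half of the Giroux correspondence is needed, not uniqueness up to stabilization.
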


Then, Theorem \ref{EH} follows from Theorem \ref{EH2} 
and the following result by Lisca and Mati\'{c} \cite{LM97}. 

\begin{theorem}[Lisca-Mati\'{c} \cite{LM97}]\label{LM}
Every Stein filling of a contact manifold is biholomorphic to a domain in a smooth complex projective manifold. Moreover, the biholomorphism can be chosen to be a symplectomorphism with respect to the Stein symplectic structure and a Kähler form on the projective manifold. 
\end{theorem}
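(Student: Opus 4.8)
The plan is to realize the Stein filling $W$ as a relatively compact sublevel set $W = \{\phi \le 0\}$ of a strictly plurisubharmonic exhaustion $\phi$ on a slightly larger open Stein manifold $\hat W$ containing $\overline W$ in its interior (the completion obtained by pushing the boundary outward), and to regard the Stein symplectic structure as $\omega = i\partial\bar\partial\phi$. The theorem then splits into two tasks: first, construct a smooth complex projective manifold $Z$ together with a biholomorphism of $W$ onto an open domain $D \subset Z$; second, exhibit a K\"ahler form $\Omega$ on $Z$ and a symplectomorphism $(W, \omega) \cong (D, \Omega_{|D})$ covering, up to isotopy, that biholomorphism. I would treat these in turn, the first being the substantial one.

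For the projective embedding I would start from the Remmert--Bishop--Narasimhan theorem (equivalence (b) in the list above): the Stein manifold $\hat W$ admits a proper holomorphic embedding $f = (f_1, \dots, f_N) \colon \hat W \hookrightarrow \C^N$, and after composing with it we may take $\phi = (\,|z|^2)_{|\hat W}$, so that $\omega$ is the restriction of the flat K\"ahler form of $\C^N$; any two strictly plurisubharmonic exhaustions with sublevel set $W$ yield symplectomorphic structures, by the convexity argument below, so this entails no loss. View $\C^N$ as the standard affine chart of $\CP^N$. The naive idea of taking the closure of $f(\hat W)$ fails, because a closed complex submanifold of $\C^N$ need not close up to an analytic subvariety of $\CP^N$. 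To get around this I would work only with the relatively compact core $\overline W$, which is $\mathcal O(\hat W)$-convex and hence admits Runge--Stein neighborhoods, and approximate the coordinate functions $f_j$ uniformly on $\overline W$ by polynomials $p_j$ (Oka--Weil approximation). For a sufficiently close approximation the map $p = (p_1, \dots, p_N)$ remains a holomorphic embedding of $W$, injectivity and immersivity being open conditions on the compact set $\overline W$, while now the image $p(W)$ lies in the affine algebraic variety $V$ given by the Zariski closure of $p(W)$, which has dimension $\dim_{\C} W$. Taking the projective closure $\overline V \subset \CP^N$ and resolving its singularities by Hironaka's theorem produces a smooth complex projective manifold $Z$; since $p(W)$ is a smooth submanifold lying in the smooth locus of $\overline V$, the resolution is a biholomorphism over it, and $D \defeq p(W)$ is a relatively compact domain of $Z$ biholomorphic to $W$.

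For the symplectic part I would put on $Z$ a K\"ahler form $\Omega$ built from a positive multiple of the Fubini--Study form pulled back through the resolution, corrected near the exceptional locus so as to stay positive there; this correction does not affect $\Omega_{|D}$. Transporting everything back by $p$, the problem becomes to show that $\omega = i\partial\bar\partial\phi$ and $\omega' \defeq p^*(\Omega_{|D})$, two strictly plurisubharmonic, hence K\"ahler, forms on $W$ with potentials $\phi$ and $\phi'$, are symplectomorphic. Here I would exploit convexity of plurisubharmonicity: the straight-line family $\phi_t = (1-t)\phi + t\phi'$ consists of strictly plurisubharmonic functions, so $\omega_t = i\partial\bar\partial\phi_t$ is a path of K\"ahler forms joining $\omega$ to $\omega'$, all in the same exact cohomology class since $\dot\omega_t = i\partial\bar\partial(\phi' - \phi)$ is exact. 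Moser's stability trick, applied on the relatively compact core with the usual cut-off near the boundary, then yields an isotopy $\Psi_t$ with $\Psi_1^*\omega' = \omega$, and composing $p$ with $\Psi_1$ gives the desired symplectomorphism.

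The main obstacle is the algebraization step of the second paragraph: one must guarantee that the Oka--Weil approximation can be performed by genuine polynomials while simultaneously preserving embeddedness of the compact core and controlling the position of the image relative to the hyperplane at infinity, so that the Zariski closure is an honest projective variety of the correct dimension and the core avoids its singular locus. This requires arranging the proper embedding so that $\overline W$ maps to a polynomially convex set, which is the delicate point. By comparison, the symplectic matching through the convexity of plurisubharmonic potentials and Moser's theorem, together with the resolution of singularities, is routine once the algebraic model $Z$ is in hand.
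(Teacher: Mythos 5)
First, note that the paper you are being compared against contains no proof of this statement: it is quoted verbatim from Lisca--Mati\'c \cite{LM97}, so the relevant comparison is with their argument, which runs: embed the Stein domain as a Runge domain in a smooth \emph{affine algebraic} manifold, pass to the projective closure, resolve the singularities at infinity, and then build a K\"ahler form on the resulting projective manifold whose restriction pulls back to the given Stein form on the nose.

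Your algebraization step has a genuine gap, and it is not the one you flagged. Polynomial convexity of the image of $\overline W$ is actually fine (holomorphic functions on a closed submanifold of $\C^N$ extend to entire functions by Cartan's Theorem B, and those are approximable by polynomials), but the conclusion you draw from Oka--Weil is false: approximating the coordinate functions by polynomials does \emph{not} make the image algebraic, and the Zariski closure of $p(W)$ need not have dimension $\dim_\C W$. Concretely, take $W$ to be a closed piece of the graph of $e^z$ in $\C^2$ and $p$ the identity (a polynomial map): any polynomial vanishing on this graph vanishes identically, so the Zariski closure is all of $\C^2$. A polynomial map applied to a transcendental submanifold just produces another transcendental submanifold. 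The correct tool --- and the one Lisca--Mati\'c invoke --- is the algebraic approximation theorem of Demailly, Lempert and Shiffman (Duke Math.\ J.\ 1994): every relatively compact holomorphically convex domain in a Stein manifold is biholomorphic to a Runge domain in a smooth affine algebraic variety. Its proof is of a different nature from yours: one approximates a holomorphic \emph{retraction} of a tubular neighborhood of $\hat W \subset \C^N$ by Nash-algebraic maps and recovers the algebraic model as (a piece of) the \emph{fixed-point locus} of the approximation, which is automatically algebraic, rather than as an image, which is not.

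Your symplectic step also fails the statement as formulated. The theorem requires a single map that is simultaneously a biholomorphism onto a domain and a symplectomorphism; your final map $p \circ \Psi_1$, with $\Psi_1$ produced by Moser's trick, is symplectic but no longer holomorphic (the same objection applies to your preliminary reduction ``we may take $\phi = |z|^2$''). Moreover, on a compact domain with boundary the Moser flow must be cut off, after which $\Psi_1^*\omega' = \omega$ holds only away from a collar. The repair is to work on the target instead of the source, as Lisca--Mati\'c do: push the potential $\phi$ forward through the biholomorphism, and glue it to a large multiple $m$ of the Fubini--Study potential by a regularized maximum, choosing the constants so that the switch from one potential to the other happens inside a collar just outside the image of $W$; the resulting form extends to a K\"ahler form on the resolution $Z$ (modified only near the exceptional locus, away from $D$) and restricts on $D$ exactly to the push-forward of $\omega$, so the original biholomorphism is itself the required symplectomorphism, with no Moser argument needed.
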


By Theorem \ref{EH2}, any contact $3$-manifold $(M, \xi )$ admits 
a Stein cobordism $V$ to a Stein fillable contact $3$-manifold $(N, \eta )$. 
Let $W$ be a Stein filling of the contact manifold $(N, \eta )$. 
By Theorem \ref{LM}, $W$ is biholomorphic and symplectomorphic to a domain $X$ in a complex projective surface $S$. 
Then, $S- \Int X$ is a concave symplectic filling of $(N, \eta )$, 
and the gluing of $V$ and $S-\Int X$ yields a concave symplectic filling of $(M, \xi )$. 
Notice that when considering symplectic cobordisms, 
the gluing of boundary components can be done if they are contactomorphic. 

On the other hand, there are also some results on Stein cobordisms and complex cobordisms from CR geometry.

\begin{theorem}[Epstein-Henkin \cite{EH01}]
If a CR $3$-manifold is Stein cobordant to a fillable CR $3$-manifold, then it is also fillable. 
\end{theorem}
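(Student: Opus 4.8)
The plan is to split the statement into a soft part and an analytic core. Since fillability and embeddability coincide for compact strictly pseudoconvex CR $3$-manifolds, I would first reduce the problem to showing that the CR structure on $M$ is embeddable in some $\C^N$ by CR functions. Recall that embeddability implies fillability: an embedded strictly pseudoconvex $M$ is maximally complex, so by the Harvey--Lawson theorem it bounds a Stein space with at most isolated normal singularities, and a resolution yields a compact complex surface having $M$ as its strongly pseudoconvex boundary; conversely a holomorphic filling can be contracted by Grauert's criterion to a Stein space, whence $M$ embeds. Thus it suffices to transport embeddability from $N$ to $M$ across the given Stein cobordism $V$.

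Let $\phi$ be the strictly plurisubharmonic function on $V$ with $M=\phi^{-1}(a)$ the pseudoconcave end and $N=\phi^{-1}(b)$ the pseudoconvex end, $a<b$. The mechanism I would use is function-theoretic. On one side, the embeddability of $N$ furnishes an abundant supply of CR functions there, which extend holomorphically inward across the pseudoconvex end; on the other side, strict pseudoconcavity of $M$ means that every function holomorphic near $M$ in $V$ extends across $M$ by the Hartogs--Bochner--Grauert phenomenon. So I would enlarge $V$ across its concave end to a complex surface $\tilde V\supset V$ in which $M$ appears as an interior strictly pseudoconvex level $\{\phi=a\}$ of an extended strictly plurisubharmonic function; the sublevel $\{\phi\le a\}$ it bounds on the far side of $M$ is then the candidate convex filling. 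Equivalently, one keeps track of the $L^2$ theory: Hörmander's estimates on the strictly plurisubharmonic $V$ give solvability of $\bar\partial$ with good control, and the Szegő projectors on the two boundary components are intertwined by the holomorphic functions on $V$, so that the closed-range property of $\bar\partial_b$ on $N$ is meant to be inherited by $\bar\partial_b$ on $M$.

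The hard part, and the true content of the theorem, is exactly this closed-range (equivalently, finite-dimensionality) statement for $\bar\partial_b$ on $M$, together with the fact that the extension across the concave end closes up to a compact filling rather than merely a one-sided collar. This is a genuinely three-dimensional phenomenon: in CR dimension at least $2$ (real dimension $\ge 5$) Kohn's theorem gives closed range, hence embeddability, for free, whereas in real dimension $3$ non-embeddable strictly pseudoconvex structures exist, as in Rossi's examples, so the cobordism must be used in an essential way. I would therefore expect the bulk of the work to lie in the analysis of the $\bar\partial$-Neumann problem on $V$ and in the comparison of the two boundary Szegő projectors, controlling the obstruction space on $M$ by the one on the embeddable end $N$; the monotone behaviour of this obstruction across a strictly plurisubharmonic cobordism is what should force embeddability, and hence fillability, of $M$.
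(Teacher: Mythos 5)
First, a structural remark: the paper does not prove this statement at all --- it is quoted verbatim as a theorem of Epstein and Henkin with a citation to \cite{EH01}, so there is no internal proof to compare against, and your proposal has to stand on its own as a reconstruction of their argument. Your framing is correct in its soft parts: for compact strictly pseudoconvex CR $3$-manifolds, fillability and embeddability are indeed interchangeable (Harvey--Lawson plus resolution in one direction, Grauert contraction to a normal Stein space and proper embedding in the other), and you have the cobordism oriented the right way, with $M=\phi^{-1}(a)$ the pseudoconcave end whose fillability is to be established and $N=\phi^{-1}(b)$ the embeddable pseudoconvex end.

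The genuine gap is that the entire analytic core is asserted rather than argued. Your proposed mechanism --- ``enlarge $V$ across its concave end to $\tilde V\supset V$ in which $M$ appears as an interior strictly pseudoconvex level, and take the sublevel $\{\phi\le a\}$ as the filling'' --- is not a step towards the theorem; it \emph{is} the theorem. The local Levi--Hartogs extension of holomorphic functions across a strictly pseudoconcave boundary point is standard, but it only yields a one-sided open collar, and the assertion that this extension ``closes up to a compact filling rather than merely a one-sided collar'' is exactly the conclusion to be proved; nothing in your text supplies the global argument. Likewise, the claim that the closed-range property of $\bar\partial_b$ on $N$ ``is meant to be inherited'' by $\bar\partial_b$ on $M$, via a ``monotone behaviour of the obstruction'' across the cobordism, is stated as an expectation, not established --- and it cannot be extracted from H\"ormander-type $L^2$ estimates alone, since $V$ is a compact cobordism one of whose boundary components is pseudoconcave, where the $\bar\partial$-Neumann problem is not subelliptic and the usual $L^2$ machinery degenerates precisely at the end one cares about. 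You correctly diagnose, via Rossi's non-embeddable examples, that this transfer of closed range is a genuinely three-dimensional difficulty and the true content of Epstein--Henkin's work; but diagnosing where the difficulty lives is not the same as resolving it, so what you have is a plausible road map rather than a proof.
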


\begin{theorem}[De Oliveira \cite{De03}]\label{De Oliveira}
There exists a complex cobordism such that 
the convex boundary is a fillable CR $3$-manifold, and the concave boundary is a non-fillable CR $3$-manifold. 
In particular, there exists a compact complex surface 
whose boundary is a connected strongly pseudoconvex non-fillable CR 3-manifold with reversed orientation. 
\end{theorem}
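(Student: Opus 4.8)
The plan is to deduce both assertions from a single construction: a compact complex surface $P$ whose connected boundary $N_1=\Bd P$ is strongly pseudoconcave and non-embeddable as a CR manifold. Granting such a $P$, the second assertion is immediate, since $-N_1$ is then strongly pseudoconvex and non-fillable (fillability of a strictly pseudoconvex CR $3$-manifold implies Stein fillability by Bogomolov--De Oliveira \cite{BO97}, hence embeddability in $\C^N$; and by Lisca--Matić \cite{LM97} a filling would even be realized in a projective surface). For the cobordism, I would exhibit $P$ as the concave end of a complex surface $V$ carrying a single integrable complex structure whose other boundary component $N_2$ is strongly pseudoconvex and embeddable --- the restriction of one ambient complex structure to the two ends may perfectly well be non-embeddable on $N_1$ and embeddable on $N_2$. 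Then $N_2$ is convexly fillable by definition while $N_1$ is not, and $V$ is the desired cobordism. I would also emphasize the contrast with Epstein--Henkin \cite{EH01}: no \emph{Stein} cobordism can connect a fillable CR $3$-manifold to a non-fillable one, so $V$ must be genuinely non-Stein, which is exactly what the construction produces.

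The conceptual heart is a rigidity principle for neighborhoods of positively embedded curves. Suppose $U$ is a tubular neighborhood of a smooth compact curve $C$ with positive self-intersection $C\cdot C>0$, so that $\Bd U$ is strongly pseudoconcave. I claim that $-\Bd U$ is fillable only if the analytic germ of $U$ along $C$ is algebraizable. Indeed, a convex filling $X$ of $-\Bd U$ shares with $U$ the same boundary CR structure but with the opposite convexity, so $U$ and $X$ glue across $\Bd U$ into a \emph{closed} complex surface $S=U\cup_{\Bd U}X$ containing $C$ (exactly as a neighborhood of a line at infinity and a ball glue to $\CP^2$). Since $C\cdot C>0$, the line bundle $\mathcal O_S(C)$ is big, so $S$ has algebraic dimension $2$; a Moishezon surface is projective, and hence the germ of $U$ along $C$ embeds in a projective surface. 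Taking the contrapositive, if I start from a neighborhood germ of $C$ that is \emph{not} algebraizable, its concave boundary $N_1$ is automatically non-fillable.

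It therefore remains to produce a non-algebraizable neighborhood germ of a positive curve. Here I would take $C$ to be an elliptic curve and $N_C$ a line bundle of positive degree. The analytic isomorphism classes of neighborhoods of $C$ with this normal bundle form a positive-dimensional family, governed by the groups $H^1(C,N_C^{-k})$, which are nonzero because $N_C$ is positive and $C$ has genus one, and only a meager subset of these germs arises from projective surfaces. This is precisely the phenomenon exploited by Rossi \cite{Ro65}: for a generic such germ the CR functions on $\Bd U$ are severely constrained --- after the Hartogs-type extension across the boundary they descend essentially to functions on $C$ and fail to separate the points lying over a given point of $C$ --- so $\Bd U$ does not embed. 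I would fix one such non-algebraizable $U$, truncate it to a compact surface $P$ with connected boundary $N_1=\Bd P$ (a circle bundle over $C$), and invoke the previous paragraph to conclude that $N_1$ is non-fillable.

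The main obstacle is twofold. First, one must prove rigorously that the chosen germ is non-algebraizable, equivalently that $N_1$ is non-embeddable; this is the delicate part of Rossi's analysis, where the positivity of $N_C$ and the genus of $C$ are indispensable, and I would carry it out by showing that the ring of CR functions, after concave extension, is too small to separate points. Second, to promote $P$ to a cobordism I must assemble a single complex structure on a surface $V$ interpolating between the non-algebraizable germ at the concave end $N_1$ and a standard, algebraizable germ at a strongly pseudoconvex end $N_2$; concretely I would realize $V$ as a compact piece of a one-parameter degeneration of neighborhoods of $C$, choosing $N_2$ to be a level set on which the induced CR structure is the embeddable one. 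Verifying that such an interpolating complex structure exists, and that $N_2$ is genuinely convex and embeddable, is where the remaining technical work lies.
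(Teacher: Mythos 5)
You should first note that the paper does not reprove this statement at all: it is quoted from \cite{De03}, and the authors merely observe that it \emph{follows} from Theorems \ref{main2} and \ref{main}. That derivation is short and entirely different from yours: take an overtwisted positive contact $3$-manifold; by Theorem \ref{main2} it has a concave holomorphic filling, whose boundary is then a strongly pseudoconvex CR $3$-manifold with reversed orientation. If that CR manifold were fillable, Bogomolov--De Oliveira \cite{BO97} would upgrade the filling to a Stein filling of the underlying contact structure, forcing tightness --- contradicting overtwistedness. Theorem \ref{main} likewise supplies the cobordism, from an overtwisted $(M_1,\xi_1)$ to a Stein fillable $(M_2,\xi_2)$. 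The non-fillability is thus witnessed at the contact level, with no need for any algebraizability analysis. Your proposal is instead an attempted reconstruction of De Oliveira's \emph{original} argument, and the paper explicitly describes what he actually does: he plumbs strongly pseudoconcave neighborhoods of compact curves \emph{each of which is positively embedded in a projective surface}. That is, each individual germ in his construction is algebraizable, and non-fillability comes from the plumbed configuration; he does not need, and does not use, a single non-algebraizable germ.

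This points to the genuine gap in your proposal: its decisive input --- that a generic neighborhood germ of an elliptic curve with positive normal bundle is non-algebraizable, with non-embeddable concave boundary --- is asserted, not proved, and the attribution to Rossi \cite{Ro65} is incorrect: Rossi's non-embeddable example is a perturbed CR structure on $S^3$ (via the double-cover trick on CR functions), not a moduli statement about positive-curve neighborhoods. Your own closing paragraph concedes that both this step and the construction of the interpolating cobordism ``is where the remaining technical work lies,'' but these are precisely the whole content of the theorem; the ``one-parameter degeneration'' producing a surface with non-embeddable concave end and embeddable convex end is not constructed, and since Epstein--Henkin \cite{EH01} rules out a Stein cobordism, its existence cannot be waved through. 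Your rigidity principle (fillable $\Rightarrow$ glue filling and germ into a closed surface $\Rightarrow$ $C\cdot C>0$ forces projectivity $\Rightarrow$ germ algebraizable) is sound in outline and is indeed the standard mechanism behind De Oliveira's non-fillability proofs, modulo two points of care: the filling may be a normal Stein space with isolated singularities, so one must resolve before invoking the projectivity criterion; and your citation of \cite{BO97} and \cite{LM97} for ``CR fillable $\Rightarrow$ embeddable'' is misplaced, since those results deform the complex structure (they work at the contact, not CR, level) --- the correct tool is Grauert--Remmert theory, by which a compact complex surface with strictly pseudoconvex boundary is a modification of a Stein space and hence embeddable near its boundary. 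As it stands, then, your text is a plausible sketch of a strategy adjacent to De Oliveira's, with the two steps that carry the actual difficulty left open; within this paper the efficient proof is the overtwistedness argument via Theorems \ref{main2} and \ref{main}.
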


We note that Theorems \ref{main2} and \ref{main} imply this theorem of De Oliveira (Theorem 1 in \cite{De03}), while our construction is quite different from his. 
In fact, his proof  is done by plumbing  strongly pseudoconcave neighborhoods of compact curves each of which is positively embedded in a projective surface. The non-fillability holds at the CR structure level in his example, and he does not discuss holomorphic fillability at the contact structure level. In this sense, our results are stronger than Theorem 1 in \cite{De03}, however, his Theorems 2 and 3 are different important results about complex cobordism and fillability of CR $3$-manifolds, which do not follow from our results. 

Theorems \ref{main2} and \ref{main} can be also seen as the complex version of Etnyre-Honda's results.

\paragraph{Contact surgery}
For a knot $K$ in any $3$-manifold $M$,  
let $M_{p/q}(K)$ denote the $3$-manifold obtained from $M$ by $(p/q)$-Dehn surgery along $K$ with respect to a given reference framing. 

Now, suppose $K$ is a Legendrian knot in a contact $3$-manifold $(M, \xi )$.  
By the Legendrian neighborhood theorem, there exists a tubular neighborhood $\nu K$ of $K$ 
contactomorphic to the contact solid torus $(N_{\delta }, \eta )$ with convex boundary, 
where 
\begin{equation}\label{Ndelta/eqn}
N_{\delta }=\left\{(\theta, x, y) \in S^1\times \R^2 \mid x^2+y^2\leq \delta ^2 \right\}\subset S^1\times \R^2
\end{equation}
and \[\eta = \ker(\cos \theta\, dx - \sin \theta\, dy),\] 
where $\theta$ is the angular coordinate on $S^1$ and $(x,y)$ are the Cartesian coordinates on $\R^2$.

On the boundary torus $T_{\delta }=\Bd N_{\delta }$, 
let $\mu $ be the meridian and $\lambda $ be the longitude determined by the contact framing. 
Then, the dividing set of the convex torus $T_{\delta }$ 
consists of two parallel copies of the longitude $\lambda $. 
By Giroux's flexibility \cite{Gi91}, 
the dividing set of a convex surface determines the contact structure on a neighborhood of the surface. 
Hence, two contact $3$-manifolds can be glued along convex surfaces if they have the same dividing sets. 

For any integer $k$, a contact structure on the manifold $M_{1/k}(K)$ can be constructed as follows. 
The manifold $M_{1/k}(K)$ is obtained from $M$ by removing $\nu K$ and gluing in $S^1\times D^2$ 
so that the meridian is sent to $\mu +k \lambda $ on $\Bd (M-\nu K)$. 
By Honda's classification of tight contact structures on the solid torus \cite{Ho00}, 
there is a unique tight contact structure on $S^1\times D^2$ that extends the contact structure $\xi_{| M-\nu K}$. 
Thus, we obtain a contact structure $\xi '$ on the manifold $M_{1/k}(K)$. 
In this case, we say that $(M_{1/k}(K), \xi')$ is obtained from $(M, \xi )$ by contact $(1/k)$-surgery on $K$. 
In particular, contact $(\pm 1)$-surgery on a Legendrian knot is well-defined. 
Ding and Geiges \cite{DG} have proved the following interesting result on contact $(\pm1)$-surgery.

\begin{theorem}[Ding-Geiges \cite{DG}]\label{DG}
Any closed conneted positive contact $3$-manifold can be obtained from 
the standard contact $3$-sphere by a sequence of contact $(\pm 1)$-surgeries. 
\end{theorem}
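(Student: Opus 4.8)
The plan is to realize $(M,\xi)$ directly by contact $(\pm1)$-surgeries using the Giroux correspondence between contact structures and open book decompositions, which converts the problem into the combinatorics of mapping class groups. First I would fix a compatible open book $(\Sigma,\phi)$ supporting $(M,\xi)$, where $\Sigma$ is a compact connected page with boundary and $\phi\in\mathrm{MCG}(\Sigma,\partial\Sigma)$ is the monodromy. The starting point of the surgery diagram is the trivial open book with disk page and identity monodromy, which supports exactly the standard contact sphere $(S^3,\xi_{\mathrm{st}})$. Since a positive stabilization of an open book leaves the supported contact structure unchanged, I would perform a sequence of positive stabilizations of $(D^2,\id)$ to enlarge the page to one diffeomorphic to $\Sigma$; the result is an open book $(\Sigma,\phi_0)$, with $\phi_0$ a product of right-handed Dehn twists, that still supports $(S^3,\xi_{\mathrm{st}})$.

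The core of the argument is the dictionary between modifying the monodromy by a single Dehn twist and performing a contact $(\pm1)$-surgery. Writing $\phi_0^{-1}\circ\phi=D_{c_1}^{\varepsilon_1}\cdots D_{c_m}^{\varepsilon_m}$ as a product of Dehn twists (with $\varepsilon_i=\pm1$) about simple closed curves $c_i\subset\Sigma$ -- possible because the mapping class group is generated by Dehn twists -- I would realize each $c_i$ as a Legendrian knot $\ell_i$ lying on a page. Replacing a current monodromy $\psi$ by $\psi\circ D_{c_i}$ then corresponds to Legendrian surgery, i.e. contact $(-1)$-surgery, along $\ell_i$, whereas composing with $D_{c_i}^{-1}$ corresponds to contact $(+1)$-surgery; here the page framing coincides with the contact framing, which is precisely what forces the surgery coefficient to be $\pm1$. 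Applying these $m$ surgeries in order to $(\Sigma,\phi_0)=(S^3,\xi_{\mathrm{st}})$ builds the monodromy up to $\phi_0\circ(\phi_0^{-1}\circ\phi)=\phi$, so the final contact manifold is $(\Sigma,\phi)=(M,\xi)$, exhibited as contact $(\pm1)$-surgery on the Legendrian link $\ell_1\cup\cdots\cup\ell_m$ in $(S^3,\xi_{\mathrm{st}})$.

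The main obstacle I expect is establishing this dictionary rigorously: one must verify that appending a Dehn twist to the monodromy changes the supported contact manifold exactly by the claimed contact $(\pm1)$-surgery, matching orientations and identifying the page framing with the contact framing, and this rests on the (deep) Giroux correspondence itself. As an alternative, more classical route avoiding open books, I would instead start from a topological surgery presentation of $M$ along a link in $S^3$ (Lickorish--Wallace), Legendrian-realize it and adjust the Thurston--Bennequin framings by stabilization so as to present some contact structure by rational contact surgery, correct its homotopy class of plane field by Lutz twists (themselves contact surgeries), and finally convert every rational coefficient $r\neq0$ into $\pm1$-coefficients: each contact $(1/k)$-surgery splits into $|k|$ contact $\mathrm{sign}(k)$-surgeries on successive Legendrian push-offs, and a general $r$ is reduced to such pieces through the negative continued fraction expansion of $r$ together with suitable stabilizations. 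In that route the delicate points are pinning down the \emph{exact} contact structure (notably in the tight case, where one cannot simply appeal to Eliashberg's classification) and the sign bookkeeping in the continued fraction reduction.
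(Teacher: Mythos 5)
The paper does not prove this statement: Theorem \ref{DG} is imported verbatim from Ding--Geiges \cite{DG} and used as a black box in Section \ref{proofs}, so there is no internal proof to compare your attempt against. Judged on its own merits, your first route is the now-standard open-book proof of this theorem and is sound in outline: the trivial open book supports the standard contact $S^3$, every compact surface with nonempty boundary is obtained from the disk by iterated positive Hopf plumbing (so the stabilized monodromy $\phi_0$ is indeed a product of right-handed twists and the page can be made diffeomorphic to $\Sigma$), and composing the monodromy with $D_c^{\pm1}$ is contact $(\mp1)$-surgery along a Legendrian realization of $c$ with page framing equal to contact framing. Two points need more care than you give them. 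First, the Legendrian realization principle applies only to non-isolating (in particular homologically essential) curves on the page, so you must factor $\phi_0^{-1}\circ\phi$ into twists along non-separating curves; this is possible (after a further stabilization to ensure the page has positive genus, using e.g.\ the chain relation to express boundary-parallel twists as products of non-separating ones), but it does not hold for an arbitrary Dehn-twist factorization. Second, the concluding identification of the surgered contact manifold with $(M,\xi)$ uses the uniqueness half of the Giroux correspondence (an open book supports a unique contact structure up to isotopy), which you invoke only implicitly.

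Your alternative route is essentially Ding--Geiges' actual argument, and the obstacle you flag in the tight case is precisely what their paper resolves by the cancellation lemma: contact $(+1)$- and $(-1)$-surgeries along Legendrian push-offs undo one another, so the relation of being reachable by contact $(\pm1)$-surgeries is invertible. One first makes $(M,\xi)$ overtwisted by auxiliary contact $(+1)$-surgeries, realizes the resulting overtwisted manifold from the standard $S^3$ via Lickorish--Wallace together with Eliashberg's classification of overtwisted structures (full Lutz twists being encoded as contact $(\pm1)$-surgeries to hit the right homotopy class of plane fields), and then reverses the auxiliary surgeries by $(-1)$-surgeries on the dual Legendrian knots. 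With that cancellation lemma supplied, your second route closes; without it, the tight case remains a genuine gap, exactly as you suspected.
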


In the following, $(M_K^{\pm}, \xi _K^{\pm})$ 
denotes the result of contact $(\pm 1)$-surgery on $K$. 

\paragraph{The standard concave handle}
The following fact is classical and well-known, but since it plays an important role in our paper, we outline a proof of it for the reader convenience.

\begin{proposition}\label{classical}
Let $U \subset \C^n$ be a non-empty open subset, and let $\phi \: U \to \R$ be a smooth function.
Let $\Omega \subset \C^{n+1} \cong \C^n \times \C$ be the submanifold with boundary defined by 
\[\Omega = \{(\bm z, z_{n+1}) \in U \times \C \mid |z_{n+1}|^2 \leq \exp (\phi (\bm z))\}.\]
Then, \[\Bd \Omega = \{(\bm z, z_{n+1}) \in \Omega \mid |z_{n+1}|^2 = \exp (\phi (\bm z))\}\] is strongly pseudoconcave (resp. strongly pseudoconvex) in $\C^{n+1}$
if and only if $\phi$ (resp. $-\phi)$ is a strictly plurisubharmonic function on $U$. 
\end{proposition}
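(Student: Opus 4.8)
The plan is to produce a single convenient defining function for $\Bd \Omega$ and simply read off the sign of its Levi form. The obvious candidate $\rho_0(\bm z, z_{n+1}) = |z_{n+1}|^2 - \exp(\phi(\bm z))$ presents $\Omega$ as the sublevel $\{\rho_0 \leq 0\}$, but its complex Hessian is cumbersome. The clean move is to observe that $|z_{n+1}|^2 = \exp(\phi(\bm z)) > 0$ on $\Bd \Omega$, so $z_{n+1}$ is nonzero on a neighborhood $\Omega'$ of $\Bd\Omega$, and there I would replace $\rho_0$ by the \emph{logarithmic} defining function
\[\rho(\bm z, z_{n+1}) = \log|z_{n+1}|^2 - \phi(\bm z).\]
On $\Omega'$ one has $\{\rho = 0\} = \Bd\Omega$ and $\{\rho \leq 0\} = \Omega \cap \Omega'$, so $\rho$ defines the same oriented hypersurface, with $\Bd\Omega$ oriented as the boundary of the sublevel. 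Since strong pseudoconvexity and pseudoconcavity are local near the boundary, and the Levi form is independent of the defining function up to a positive factor, it suffices to compute $\mathcal L_{\Bd\Omega, \rho}$.

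The point of the substitution is that $\log|z_{n+1}|^2 = \log z_{n+1} + \log\bar z_{n+1}$ is \emph{pluriharmonic} on $\{z_{n+1}\neq 0\}$, so its complex Hessian vanishes identically there; and $\phi$ depends only on $\bm z$. Hence $H_\rho$ decomposes according to the splitting into $\bm z$-directions and the $z_{n+1}$-direction: it equals $-H_\phi$ on the $\bm z$-block, vanishes in the $z_{n+1}$-direction, and has no mixed terms. Next I would identify the $(1,0)$-part of the holomorphic tangent space $\hT\Bd\Omega$ as the kernel of the form $\partial\rho = z_{n+1}^{-1}\,dz_{n+1} - \partial\phi$; explicitly, writing $\partial\phi(v) = \sum_{\alpha=1}^n (\partial\phi/\partial z_\alpha)\,v_\alpha$, this subspace is
\[\{(v, v_{n+1}) \mid v_{n+1} = z_{n+1}\,\partial\phi(v)\},\]
which is a graph over the first $n$ complex directions, so the projection $(v, v_{n+1}) \mapsto v$ is a $\C$-linear isomorphism onto $\C^n$.

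Restricting $H_\rho$ to this subspace then finishes the argument: because $H_\rho$ carries no $z_{n+1}$-contribution, one gets $H_\rho\bigl((v,v_{n+1}),(w,w_{n+1})\bigr) = -H_\phi(v,w)$, so under the above isomorphism the Levi form $\mathcal L_{\Bd\Omega}$ is identified with $-H_\phi$. Consequently $\mathcal L_{\Bd\Omega}$ is negative definite exactly when $H_\phi$ is positive definite, i.e. when $\phi$ is strictly plurisubharmonic, which is strong pseudoconcavity; and it is positive definite exactly when $-\phi$ is strictly plurisubharmonic, which is strong pseudoconvexity. The only real subtlety, and where I would be most careful, is keeping the orientation conventions straight: it is precisely the convention that $\Bd\Omega$ bounds the sublevel $\{\rho \leq 0\}$ that pins down which sign of the Levi form corresponds to convexity and which to concavity. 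The computation itself is routine once the pluriharmonic splitting of $H_\rho$ is in place.
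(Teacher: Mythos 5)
Your proof is correct, and while it follows the same skeleton as the paper's, the key computation is organized differently, in a way worth noting. The paper works directly with the defining function $g(\bm z, z_{n+1}) = |z_{n+1}|^2 - \exp(\phi(\bm z))$, whose complex Hessian is
\[H_g = -e^\phi\bigl(H_\phi + \partial\phi\otimes\bar\partial\phi\bigr) + dz_{n+1}\otimes d\bar z_{n+1};\]
the rank-one correction $\partial\phi\otimes\bar\partial\phi$ and the $z_{n+1}$-term then cancel only after restricting to the graph subspace $\ker\partial g$ and invoking the boundary relation $|z_{n+1}|^2 = e^{\phi(\bm z)}$, which yields $\mathcal L_{\Bd\Omega}(v,\bar v) = -e^{\phi(\bm z)} H_\phi(w,\bar w)$. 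Your logarithmic substitution $\rho = \log|z_{n+1}|^2 - \phi$ front-loads this cancellation: since $\log|z_{n+1}|^2$ is pluriharmonic on $\{z_{n+1}\neq 0\}$, the Hessian $H_\rho$ is block-diagonal and equal to $-H_\phi$ from the start, so no boundary identity is needed in the restriction step. The price is that $\rho$ is only defined off $\{z_{n+1}=0\}$, and you correctly pay it: you check that $z_{n+1}\neq 0$ near $\Bd\Omega$, that $\rho$ and the obvious defining function cut out the same oriented sublevel there, and that the Levi form is independent of the choice of defining function up to a positive factor (a fact the paper records in its preliminaries). Both arguments identify the $(1,0)$-part of $\hT\Bd\Omega$ as a graph over $\C^n$ via the kernel of $\partial$ of the defining function (your $v_{n+1} = z_{n+1}\,\partial\phi(v)$ matches the paper's condition from $\partial g$), and both obtain the pointwise identification of $\mathcal L_{\Bd\Omega}$ with $-H_\phi$ through a surjection onto every $\bm z\in U$ and every tangent direction $w$, which is exactly what makes the ``only if'' direction work. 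In net: your version is computationally cleaner (the paper must cancel $e^\phi|\partial\phi(w)|^2$ against $|v_{n+1}|^2$ on the boundary, you cancel nothing), while the paper's avoids changing the defining function; one purely cosmetic remark is that with the paper's Hermitian-form conventions your identity should read $H_\rho\bigl((v,v_{n+1}),\overline{(w,w_{n+1})}\bigr) = -H_\phi(v,\bar w)$.
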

\begin{proof}
Consider the function $g \: \Omega \to \R$ defined by $g(\bm z, z_{n+1}) = |z_{n+1}|^2 - \exp(\phi(\bm z))$, with $\bm z = (z_1, \dots, z_n)$. A direct computation in coordinates shows that \[H_g = -e^\phi (H_\phi + \partial \phi \otimes \bar\partial \phi) + dz_{n+1} \otimes d\bar z_{n+1}.\]

Let $p=(\bm z,z_{n+1}) \in \Bd \Omega$, and let $v \in \T_{\! p} \C^{n+1} \otimes \C$ be a $(1,0)$-vector. Then $v \in \hT_{\! p} \Bd \Omega \otimes \C$ if and only if $v$ is in the kernel of \[\partial g = -e^\phi\, \partial \phi + \bar z_{n+1}\, dz_{n+1}.\] By setting \[v = w + v_{n+1}\, \frac{\partial}{\partial z_{n+1}}, \] with \[w = \sum_{k=1}^n v_k\, \frac{\partial}{\partial z_k}\] and $v_k \in \C$ for all $k$, the vector $v \in \hT_{\! p} \Bd \Omega \otimes \C$ is uniquely determined by $w$, which may be arbitrarily chosen in $T^{(1,0)}_{\bm z}\, U$, because $z_{n+1} \neq 0$ on $\Bd \Omega$. 
This and the fact that $|z_{n+1}|^2 = \exp(\phi(\bm z))$ on $\Bd \Omega$, imply that \[\mathcal L_{\Bd \Omega}(v, \bar v) = H_g(v, \bar v) = -\exp(\phi(\bm z))\, H_\phi(w, \bar w). \qedhere\]
\end{proof}

\begin{example}
Let \[\Omega _a= \left\{(z_1, z_2) \in \C^2 \mid |z_2|\leq \exp\! \left(\dfrac{|z_1|^2}{a}-a\right) \right\}\] for $a>0$. 
Then, $\Bd\Omega _a$ is a strongly pseudoconcave 3-manifold in $\C^2$.  
\end{example}

We put
$E=\{(z_1, z_2) \in \C^2 \mid z_2=0\}$. 

\begin{proposition}\label{psi/thm}
On the domain $\Omega_a - E$, there exists a strictly plurisuperharmonic function $\psi_a$ with no critical points 
satisfying $\psi_a ^{-1}(0)=\Bd \Omega _a$ and $\psi_a^{-1}(-\infty , 0]=\Omega _a - E$. 
\end{proposition}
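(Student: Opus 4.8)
The plan is to write down an explicit candidate built from the obvious defining function of $\Bd\Omega_a$ and then repair its one defect. Since $\Bd\Omega_a = \{|z_2| = \exp(|z_1|^2/a - a)\}$, the function
\[u(z_1,z_2) = \log|z_2| - \frac{|z_1|^2}{a} + a\]
vanishes exactly on $\Bd\Omega_a$, is negative on the interior of $\Omega_a$, and tends to $-\infty$ as $(z_1,z_2)$ approaches $E = \{z_2 = 0\}$; hence on $\Omega_a - E$ one has $u \le 0$ with equality precisely on the boundary, so $u$ already realizes the required level-set conditions $u^{-1}(0) = \Bd\Omega_a$ and $u^{-1}((-\infty,0]) = \Omega_a - E$. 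Moreover $u$ has no critical points there, because $\partial u = -\frac{\bar z_1}{a}\,dz_1 + \frac{1}{2z_2}\,dz_2$ has a nonvanishing $dz_2$-component whenever $z_2 \neq 0$.

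The trouble is that $u$ is not strictly plurisuperharmonic: since $\log|z_2|$ is pluriharmonic and $-|z_1|^2/a$ is strictly plurisuperharmonic only in the $z_1$-direction, the complex Hessian $H_u = \diag(-1/a,\,0)$ is merely negative semidefinite, degenerating exactly along the $z_2$-direction. I would cure this by composing with a suitable real function, i.e. setting $\psi_a = F(u)$ for a smooth $F$ with $F(0) = 0$, $F' > 0$, and $F'' < 0$ on $(-\infty, 0]$, for instance $F(t) = 1 - e^{-t}$. Composition preserves all the level-set and critical-point properties, since $F$ is an increasing diffeomorphism of $(-\infty,0]$ with $F(0)=0$ and $F' > 0$, so only strict plurisuperharmonicity remains to be checked.

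For that I would use the standard identity
\[H_{F(u)} = F'(u)\,H_u + F''(u)\,\partial u \otimes \bar\partial u.\]
The point is that the rank-one positive term $\partial u \otimes \bar\partial u$ is nondegenerate precisely in the $z_2$-direction along which $H_u$ is degenerate, so taking $F'' < 0$ should restore negative definiteness. Concretely, in the basis $\partial/\partial z_1, \partial/\partial z_2$ the matrix of $H_{F(u)}$ has negative $(1,1)$-entry $-F'/a + F''|z_1|^2/a^2$, and I expect its determinant to collapse, after the cross terms cancel, to $-F'F''/(4a|z_2|^2) > 0$; together these two sign conditions give negative definiteness at every point of $\Omega_a - E$.

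The main obstacle — really the only nonroutine point — is this determinant computation, where it must be checked that the off-diagonal contribution of $\partial u \otimes \bar\partial u$ (which couples the two directions through the term $\bar z_1/(2a\bar z_2)$) exactly cancels the $|z_1|^2$-part of the product of the diagonal entries, leaving a manifestly positive quantity independent of $z_1$. Once this cancellation is verified, strict plurisuperharmonicity holds for any admissible $F$, and choosing $F(t) = 1 - e^{-t}$ completes the construction while keeping the level-set and critical-point conditions intact.
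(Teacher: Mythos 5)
Your proof is correct, but it takes a genuinely different route from the paper's. The paper foliates $\Omega_a - E$ by the strongly pseudoconcave hypersurfaces $S_c = \Bd \Omega_c$ for $c \geq a$ (each leaf being handled by Proposition \ref{classical}), and then asserts---without detail---that such a foliation carries a strictly plurisuperharmonic function having the leaves as level sets, after a reparametrization; that existence step is soft and itself conceals a composition argument of exactly the kind you make explicit. You instead write down the global defining function $u = \log|z_2| - |z_1|^2/a + a$ and repair its merely negative semidefinite Hessian by a concave increasing post-composition $F$. Your one deferred computation comes out exactly as you predicted: with $\partial u = -\frac{\bar z_1}{a}\,dz_1 + \frac{1}{2z_2}\,dz_2$, the off-diagonal entry of $H_{F(u)}$ is $-F''\,\bar z_1/(2a\bar z_2)$, and
\[
\det H_{F(u)} \;=\; \left(-\frac{F'}{a} + \frac{F''\,|z_1|^2}{a^2}\right)\frac{F''}{4|z_2|^2} \;-\; \frac{(F'')^2\,|z_1|^2}{4a^2\,|z_2|^2} \;=\; -\frac{F'F''}{4a\,|z_2|^2} \;>\; 0,
\]
which together with the negative $(1,1)$-entry gives negative definiteness of the $2\times 2$ Hermitian matrix at every point; moreover $d(F(u)) = F'(u)\,du \neq 0$ and $F(t) = 1 - e^{-t}$ preserves the level-set conditions, so the statement follows. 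Note that your level sets are the vertical rescalings $\{|z_2| = e^{t}\exp(|z_1|^2/a - a)\}$ rather than the paper's family $\{S_c\}$; since Propositions \ref{attach} and \ref{concave boundary} use only the properties of $\psi_a$ stated in the proposition (plus transversality of its level sets near the attaching region, which holds for your foliation just as well), the substitution is harmless. What each approach buys: yours is elementary, fully explicit, and proves slightly more ($F(u)$ is strictly plurisuperharmonic on all of $\C^2 - E$, not just on $\Omega_a - E$), thereby closing a step the paper leaves to the reader; the paper's is geometrically transparent, exhibiting the level sets as the boundaries of the shrinking handles $\Omega_c$ and reusing Proposition \ref{classical} directly.
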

\begin{proof}
Put \[S_c=\left\{(z_1, z_2) \in \C^2 \mid |z_2|=\exp\! \left(\dfrac{|z_1|^2}{c}-c\right) \right\} = \Bd \Omega_c\] for any $c>0$.
Then, $S_c$ is a strongly pseudoconcave hypersurface in $\C^2$ by Proposition \ref{classical}. 
The domain $\Omega _a - E$ is foliated by 
the family of strongly pseudoconcave hypersurfaces $\{ S_c \mid c\geq a \}$. 
Hence, there exists a strictly plurisuperharmonic function $\psi $ 
whose level sets coincide with the hypersurfaces $S_c$. 
It is obviously possible to reparametrize $\psi$ to get a function that shall be denoted by $\psi_a$ such that 
$\psi_a ^{-1}(0)=\Bd \Omega _a$ and $\psi_a^{-1}(-\infty , 0]=\Omega _a - E$. 
\end{proof}

Now, we fix the function $\psi_a$ and set  
\begin{gather*}
H_a=\Omega _{a} \cap \left\{(z_1, z_2) \in \C^2 \mid |z_1|\leq 1+a^{-1} \right\}, \\
H_a(u)=H_a - \psi_a^{-1}(-u, 0], \\
H_a(u, v)=H_a \cap \psi_a ^{-1}[-v, -u] 
\end{gather*}
for positive numbers $u$ and $v$. 
We call $H_a$ the standard concave handle. 
There is an identification of $H_a$ with the standard topological 2-handle $B^2 \times B^2$ given by the diffeomorphism
\begin{equation}\label{h-diffeo/eqn}
\begin{gathered}
h_a \: B^2 \times B^2 \longrightarrow H_a\\
h_a(w_1,w_2) = \left((1+a^{-1}) w_1,\; \exp\!\left(\frac{|w_1|^2}{a} - a \right) w_2 \right).
\end{gathered}
\end{equation}

We make use of the following terminology, which is a bit different from the standard one of handle theory, being adapted to the holomorphic case: the \textsl{core} of $H_a$ is the holomorphic disk $\{z_2 =0\} \cap H_a$, and the circle $|z_1|=1$ in the core is called the \textsl{attaching circle} of $H_a$; the \textsl{attaching region} of $H_a$ is the subset of $H_a$ defined by $1\leq|z_1|\leq1+a^{-1}$. 

Roughly speaking, our idea is to attach $H_a$ 
to the concave boundary of a complex surface $W$ 
along a transverse knot in the contact boundary $\Bd W$. 
Observe that the size of the handle $H_a$ decreases as $a$ increases, and we can take an arbitrarily thin standard handle. 
This is an important point of our handle attaching method and a crucial difference with the case of Weinstein handles.

\section{Holomorphic handle attaching}\label{HHA}
The purpose of this section is to prove the following theorem.

\begin{theorem}\label{cobordism}
Let $W$ be a complex surface with strongly pseudoconcave contact boundary $(M, \hT M)$. 
Consider a Legendrian knot $K$ in $(M, \hT M)$, 
and let $(N, \eta)$ be the result of a $(\pm1)$-contact surgery on $(M, \hT M)$ along $K$. 
Then, there exists a closed collar neighborhood $U$ of  the boundary $M$ and 
a concave holomorphic filling $Y$ of $(-N, \eta )$ such that  $W - \Int U\subset Y$. 
In particular, there exists a concave complex cobordism from $(M, \hT M)$ to $(N, \eta )$. 
\end{theorem}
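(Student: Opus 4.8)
The plan is to realize the contact $(\pm1)$-surgery as the change induced on the concave boundary by gluing in the standard concave handle $H_a$ of Proposition \ref{psi/thm}, taking $a$ large enough that $H_a$ fits inside a prescribed collar of $M$. First I would replace the Legendrian knot $K$ by its positive transverse push-off $K'$, a transverse knot in $(M, \hT M)$. Using the transverse neighborhood theorem together with the collar foliation by strongly pseudoconcave level sets of a plurisuperharmonic function defining $M$, I would identify a neighborhood of $K'$ in $W$ with a neighborhood of the attaching region $\{1 \le |z_1| \le 1 + a^{-1}\}$ of $H_a$, matching the foliation $\{S_c\}$ with the collar foliation near $M$; this biholomorphic matching is the content of Proposition \ref{attach}. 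Because $K'$ is transverse, the framing with which it bounds the core holomorphic disk of $H_a$ is unconstrained, and I would choose it to differ from the contact framing of $K$ by $\pm1$, which is exactly the choice distinguishing the two surgeries.

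Granted this biholomorphism, I would remove a thin closed collar $U$ of $M$ and form the complex surface $Y = (W - \Int U) \cup H_a$, gluing $H_a$ along its attaching region by the identification just constructed. Since the identification is biholomorphic, $Y$ is a complex surface and $W - \Int U \subset Y$, as required. I would then invoke Proposition \ref{concave boundary} to splice the plurisuperharmonic function $\psi_a$ on the handle side with the given collar function on the $(W - \Int U)$ side into a single strictly plurisuperharmonic function without critical points near the new boundary, whose zero level set $N$ is a smooth closed hypersurface. By Proposition \ref{classical}, or directly because it is a regular level set of a plurisuperharmonic function, $N$ is strongly pseudoconcave, so $(N, \hT N)$ is a genuine concave boundary and $Y$ is a concave holomorphic filling.

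It remains to identify $(N, \hT N)$ as a contact manifold. The attachment performs a Dehn surgery on $M$ along $K'$ with the framing fixed above; at the contact level, gluing along the positive push-off of $K$ with this framing yields precisely the contact $(\pm1)$-surgery on $K$, since the complement of a tubular neighborhood of $K'$ keeps its contact structure while the glued solid torus carries the unique tight contact structure extending it, by Honda's classification and Giroux flexibility as recalled in Section \ref{def}. Hence $(N, \hT N)$ is contactomorphic to $(N, \eta)$, so $Y$ is a concave holomorphic filling of $(-N, \eta)$. Finally, the region of $Y$ complementary to $W - \Int U$ is bounded by a parallel copy of $M$ and by $N$, and with the orientation conventions of the definitions it is the asserted concave complex cobordism from $(M, \hT M)$ to $(N, \eta)$.

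I expect the main obstacle to be the holomorphic attaching of Proposition \ref{attach}: one must match the integrable complex structure of $H_a \subset \C^2$ with that of $W$ along the entire attaching region, not merely along $K'$, and simultaneously control the two plurisuperharmonic functions so that they splice smoothly in the previous step. In contrast with Eliashberg's Stein handles, which are glued along isotropic spheres and whose totally real cores drive the local analysis, the core of $H_a$ here is a holomorphic disk, so the delicate point is to build a biholomorphism of collar neighborhoods carrying one pseudoconcave foliation onto the other. It is precisely the freedom to take $a$ arbitrarily large, shrinking $H_a$ to an arbitrarily thin handle, that should render these local models compatible.
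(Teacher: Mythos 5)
Your proposal follows the paper's proof essentially verbatim: the paper deduces Theorem \ref{cobordism} directly by combining Proposition \ref{attach} (holomorphic attachment of $H_a$ along a transverse push-off, with arbitrary framing via the integer $n$ and the orientation-reversing identification) with Proposition \ref{concave boundary} (splicing $\psi_a$ with the collar function by taking a minimum and applying Richberg approximation, then identifying the new contact boundary as the result of contact $(\pm 1)$-surgery via tightness of the solid torus $\Bd \widetilde W' \cap H_a \subset \C^2$ and Honda's classification), which is exactly the route you describe. One minor clarification: Proposition \ref{attach} does not carry the foliation $\{S_c\}$ onto the collar foliation of $M$ --- the biholomorphism from Lemma 5.40 of \cite{CE12} only extends the real analytic totally real annulus embedding, and the mismatch between the two plurisuperharmonic functions is precisely what the corner-smoothing construction of Proposition \ref{concave boundary} is designed to absorb, so the ``delicate point'' you anticipate in your last paragraph is weaker than you fear.
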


This will be proved by the holomorphic handle attaching method. 
In contrast to the case of Eliashberg's construction, 
the core of a $2$-handle is a holomorphic disk and the attaching circle is a transverse knot in our case. 
We will make use of the notation 
$\Delta(r)=\left\{z\in \C \mid |z|<r \right\}$ for the open 2-disk of radius $r$. 
Let $(z_1, z_2)$ be the canonical coordinates on $H_a \subset \C^2$ and 
let $D$ denote the core disk $\{z_2=0\}\cap H_a$.

\begin{proposition}\label{attach}
Let $W$ be a complex surface with strongly pseudoconcave boundary $\Bd W$, 
let $\xi $ be the induced contact structure on the boundary, and let $L$ be a transverse knot in $(\Bd W, \xi )$.
Then, the standard concave handle $H_a$ can be holomorphically attached along $L$ with every given framing, for any sufficiently large $a>0$.
\end{proposition}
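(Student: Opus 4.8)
The plan is to realize the attaching of $H_a$ as a biholomorphism between a neighborhood of the attaching region of $H_a$ and a neighborhood of the transverse knot $L$ in $\Bd W$, carried out so that it respects the pseudoconcave structures. First I would set up a standard local holomorphic model for a neighborhood of a transverse knot in a strongly pseudoconcave boundary. Since $L$ is transverse to $\xi = \hT\Bd W$, I expect to use the transverse neighborhood theorem to identify a tubular neighborhood $\nu L$ of $L$ in $(\Bd W, \xi)$ with a standard contact solid torus, and then thicken this to a neighborhood in $W$ on which the complex structure $J$ and the defining strictly plurisuperharmonic function $\phi$ take a normalized form. The key observation, using Proposition \ref{classical}, is that the concave boundary near $L$ looks like $\Bd\Omega$ for $\Omega = \{|z_{n+1}|^2 \leq \exp(\phi)\}$ in suitable holomorphic coordinates, exactly the local shape of $\Bd H_a$ near its attaching circle.

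Next I would match the handle to this model. The attaching circle of $H_a$ is the core circle $|z_1| = 1$, $z_2 = 0$, which sits inside the boundary $\Bd H_a$ as a transverse knot for the induced contact structure (it is transverse because the core is the holomorphic disk $\{z_2 = 0\}$, hence the contact planes $\hT\Bd H_a$ meet it transversally). The goal is a biholomorphism from a neighborhood of the attaching region $1 \leq |z_1| \leq 1 + a^{-1}$ onto a neighborhood of $L$ in $W$ that carries $\Bd H_a$ to $\Bd W$ and respects the coorientations, so that the two strongly pseudoconcave pieces glue to a strongly pseudoconcave boundary of the resulting complex surface. Here the parameter $a$ enters crucially: as remarked after \eqref{h-diffeo/eqn}, the handle $H_a$ shrinks as $a$ grows, so for $a$ large enough the attaching region of $H_a$ is thin enough to embed biholomorphically into the fixed local model around $L$. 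This is the place where the hypothesis ``for any sufficiently large $a > 0$'' is used.

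The framing is addressed last. Because $L$ is a transverse knot rather than a Legendrian one, there is no canonical contact framing forcing a particular normal identification; the self-linking does not constrain the way the disk bundle normal to the core is glued. Concretely, the biholomorphism matching $\partial H_a$ to $\partial W$ along $L$ can be precomposed with a rotation of the $z_2$-coordinate (a fibrewise holomorphic automorphism of the normal disk, i.e.\ $z_2 \mapsto e^{i m \theta} z_2$ after trivializing over the circle), which changes the framing by an arbitrary integer while preserving both the complex structure and the form of the concave boundary. This freedom is exactly what lets us attach $H_a$ with every prescribed framing.

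The step I expect to be the main obstacle is the normalization of the complex structure in the local model around $L$: ensuring that, after applying the transverse neighborhood theorem at the contact level, one can genuinely choose holomorphic coordinates on a neighborhood of $L$ in $W$ in which $\Bd W$ acquires the graphical form $|z_2|^2 = \exp(\phi(z_1))$ of Proposition \ref{classical}, with the gluing biholomorphic rather than merely smooth. The contact-geometric identification is purely real, so the work is in upgrading it to a holomorphic one compatible with the pseudoconcave defining function; controlling the error terms so that the Levi form stays negative definite throughout the gluing, and verifying that shrinking the handle (large $a$) suffices to absorb these errors, is the technical heart of the argument.
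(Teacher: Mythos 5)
There is a genuine gap, and it sits exactly at the point you flag as the ``technical heart'': it is not a matter of controlling error terms but an actual obstruction. You propose to upgrade the smooth transverse-neighborhood identification to holomorphic coordinates near $L$ in which $\Bd W$ takes the graphical model form $|z_2|^2 = \exp(\phi(z_1))$, and then to glue $\Bd H_a$ onto $\Bd W$ boundary-to-boundary by a biholomorphism. But a strongly pseudoconcave hypersurface in a complex surface carries local CR invariants (Cartan's invariants of the induced CR structure), so a neighborhood of $L$ in $\Bd W$ is in general not CR equivalent --- hence not carried by any ambient biholomorphism --- to the corresponding piece of $\Bd H_a$, nor to any preassigned model hypersurface. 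The transverse neighborhood theorem yields only a contactomorphism, and contactomorphic strongly pseudoconcave hypersurfaces need not be CR diffeomorphic; shrinking the handle (large $a$) does not help, since the obstruction already lives in the jets of the CR structure along $L$. Consequently the boundary-matching biholomorphism you want does not exist in general, and your framing twist $z_2 \mapsto z_1^m z_2$ (correct in spirit) is precomposed with a map that is unavailable. A secondary slip: the attaching circle $\{|z_1|=1,\ z_2=0\}$ lies on the core disk in the interior of $H_a$, not in $\Bd H_a$.

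The paper's proof circumvents precisely this difficulty by never matching the boundaries. It glues along a \emph{totally real annulus} $A_\epsilon = S^1 \times \mathopen]-\epsilon,\epsilon\mathclose[ \subset \C^2$, mapped near $L$ by $g_n(z,x)=(z, x z^n)$ composed with the contactomorphism of the transverse neighborhood theorem; the winding number $n$ produces the arbitrary framing, playing the role of your twist. Totally real submanifolds, unlike hypersurfaces, have no local holomorphic invariants: after a $C^\infty$-approximation of $f \circ g_n$ by a real analytic totally real embedding, Lemma 5.40 of Cieliebak--Eliashberg extends it uniquely to a biholomorphism of open neighborhoods, and the attaching region of $H_a$ fits inside that neighborhood once $a$ is large --- this, not the absorption of Levi-form errors, is where ``sufficiently large $a$'' enters (the handle then attaches along a real analytic transverse knot $C^\infty$-close to $L$, which suffices). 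The price is that $\widetilde W = W \cup_{\widetilde f_n} H_a$ has corners along the $2$-torus $\Bd W \cap \Bd H_a$: Proposition \ref{attach} claims only the holomorphic attaching, while producing a smooth strongly pseudoconcave boundary is deliberately deferred to Proposition \ref{concave boundary}, where the defining plurisuperharmonic function of $\Bd W$ and the function $\psi_a$ of Proposition \ref{psi/thm} are combined by a minimum and smoothed via Richberg's theorem. Your proposal conflates these two steps, and the step it relies on is the one that fails.
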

\begin{proof}
We first consider the special case where $\Bd W$ is real analytic in $W$ in a neighborhood of $L$.
There exists a tubular neighborhood $N$ of $L$ in the contact manifold $(\Bd W, \xi )$ which is real analytic in $W$ and (smoothly)
contactomorphic to the standard model $(N_\delta, \xi_0)$ endowed with the positive contact structure $\xi_0 = \ker(d\theta + r^2d\varphi)$,
so that $L$ corresponds to the circle $S^1 \times \{0\}$ of the solid torus $N_\delta \subset S^1 \times \R^2$ defined in \eqref{Ndelta/eqn}, where $\theta$ is the angular coordinate in $S^1$ and $(r,\varphi)$ are the polar coordinates in $\R^2$.
We fix such a contactomorphism $f\: (N_\delta, \xi_0) \to N$. Since $\Bd W$ is strongly pseudoconcave, the diffeomorphism $f$ is orientation-reversing.

For any positive number $\epsilon $ smaller than $\delta $, we take the totally real annulus 
\[A_{\epsilon}:=S^1\times \mathopen] -\epsilon, \epsilon \mathclose[ \subset \C \times \C,\]
and define an embedding $g_n \: A_{\epsilon }\to N_\delta$ by 
\[g_n(z, x)=(z, x z^n),\]
where $n$ is any integer which will give the attaching framing of the handle $H_a$, and where we consider $N_\delta \subset S^1 \times \C \subset \C^2$ up to the obvious identification $\R^2 \cong \C$.

Next, the composition $f\circ g_n \: A_\epsilon \to N$ admits a $C^{\infty }$-approximation to a real analytic totally real embedding $f_n \: A_{\epsilon}\to N$. Moreover, the curve $\widetilde L = f_n(S^1 \times \{0\}) \subset N$ is a real analytic transverse knot $C^\infty$-close to $L$.

The map $f_n$ is a real analytic diffeomorphism between totally real submanifolds 
$A_{\epsilon }\subset \C^2$ and $f_n(A_{\epsilon}) \subset W$, and hence, by Lemma 5.40 in Cieliebak and Eliashberg \cite{CE12},
it can be extended uniquely to a biholomorphism $\widetilde f_n \: U \to V$ between sufficiently small neighborhoods $U$ and $V$ of $A_{\epsilon }\subset \C^2$ and of $f_n(A_{\epsilon})\subset W'$, respectively, where $W'$ is a larger complex surface without boundary that contains $W$ as a complex domain.

Now, observe that for $a$ large enough, the attaching region of the standard concave handle $H_a$ is contained in $U$.
Then, we can attach $H_a$ holomorphically to $W$ along
$\widetilde L \subset \Bd W$ by means of the biholomorphism $\widetilde f_n$, obtaining a complex surface $\widetilde W = W\cup_{\widetilde f_n} H_a$. By construction, the topological framing of this 2-handle is represented by the knot $\widetilde L_n' = \widetilde f_n\left(S^1 \times \left\{\frac{\epsilon}{2}\right\}\right) \subset \Bd W$, which is a parallel copy of $\widetilde L$ in $\Bd W$, as it can be easily derived by considering the identification of $H_a$ with the topological 2-handle given by the diffeomorphism $h_a$ in equation \eqref{h-diffeo/eqn}. Moreover, $\widetilde L_n'$ is isotopic, inside the tubular neighborhood $N$ of $\widetilde L \subset \Bd W$, to $\widetilde L_0'$ minus $n$ full twists (because $f$ is orientation-reversing). Since $n \in \Z$ is arbitrary, any topological framing can be thus attained.

Finally, the general case immediately follows because $\Bd W$ can be pushed a little bit in $W$ near $L$ so that there it becomes real analytic, and then the above construction applies. It is thus enough to observe that this holomorphic handle addition can be considered along the original $L$, and we still indicate the resulting complex surface by $\widetilde W$.
\end{proof}

We note that the complex surface $\widetilde W$ has corners along a 2-torus in the boundary, corresponding to the intersection $\Bd W \cap \Bd H_a$. 
Next, we remove the corners and make the boundary strongly pseudoconcave. 

\begin{proposition}\label{concave boundary}
Let $W$, $\xi$ and $L$ be as in Proposition \ref{attach}.
Then, the result $\widetilde W$ of a holomorphic attaching of $H_a$ to $W$ along $L$ as above, contains a closed collar neighborhood $C$ of its boundary 
such that $\widetilde W' = \widetilde W - \Int C$ is a compact complex surface with smooth strongly pseudoconcave boundary, for any sufficiently large $a>0$. 
If the transverse knot $L$ is the positive push-off of a Legendrian knot $K \subset \Bd W$, 
then the contact structure on $\Bd \widetilde W'$ can be obtained from $\xi $ by a Legendrian surgery on $K$. 
By suitably choosing the attaching framing of the handle $H_a$, 
both contact $(\pm 1)$-surgeries on $K$ can be attained. 
\end{proposition}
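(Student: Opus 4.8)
The plan is to produce a single strictly plurisuperharmonic function on $\widetilde W$ near its boundary whose regular sublevel set is the desired smooth concave filling, obtained by smoothing together the function defining $\Bd W$ and the function $\psi_a$ carried by the handle. Since $\Bd W$ is strongly pseudoconcave, I first fix a strictly plurisuperharmonic $\phi$ on a collar of $\Bd W$ in $W$ with $\Bd W = \phi^{-1}(0)$ and collar equal to $\{\phi \le 0\}$; on the handle side I use $\psi_a$ from Proposition \ref{psi/thm}, which is strictly plurisuperharmonic on $\Omega_a - E$, has no critical points, and satisfies $\{\psi_a \le 0\} = \Omega_a - E$. The biholomorphism $\widetilde f_n \: U \to V$ of Proposition \ref{attach} identifies a neighborhood of the totally real annulus $A_\epsilon \subset \C^2$ with a neighborhood in $W'$, so on the overlap (the attaching region) I may view both $\psi_a$ and $\phi' \defeq \phi \circ \widetilde f_n$ as strictly plurisuperharmonic functions on one and the same piece of $\widetilde W$, plurisuperharmonicity being preserved because $\widetilde f_n$ is biholomorphic. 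Away from the overlap only $\phi$ is defined on the $W$-side and only $\psi_a$ on the handle-side, and the hypersurfaces $\{\phi = 0\}$ and $\{\psi_a = 0\}$ meet transversally along the corner torus $\Bd W \cap \Bd H_a$.

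The decisive point, and the simplification over Eliashberg's convex construction, is that the pointwise minimum of two strictly plurisuperharmonic functions is again plurisuperharmonic, so I smooth the corner with a \emph{regularized minimum} $\Phi \defeq \mathrm{rmin}_\delta(\phi', \psi_a)$, the negative of the Cieliebak--Eliashberg regularized maximum \cite{CE12} of $-\phi', -\psi_a$. This $\Phi$ is smooth and strictly plurisuperharmonic, equals $\phi'$ where $\phi' \ll \psi_a$ and $\psi_a$ where $\psi_a \ll \phi'$, and satisfies $\mathrm{rmin}_\delta \le \min \le \mathrm{rmin}_\delta + \delta$. Here the thinness of the handle enters: since $H_a$ shrinks as $a$ grows, for $a$ large the transition region $\{|\phi' - \psi_a| \le \delta\}$ where $\Phi$ genuinely mixes the two functions can be confined to the overlap $U$, on which both functions are simultaneously defined and plurisuperharmonic. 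Choosing a regular value $-\kappa$ with $\kappa > \delta$, the estimate $\min \le \Phi + \delta$ gives $\{\Phi \le -\kappa\} \subseteq \{\min(\phi,\psi_a) < 0\} \subseteq \widetilde W$, while away from the corner $\{\Phi = -\kappa\}$ is just a small inward push of $\Bd \widetilde W$. Thus $\widetilde W' \defeq \{\Phi \le -\kappa\}$ is a compact complex surface whose smooth boundary is a regular level of a strictly plurisuperharmonic function, hence strongly pseudoconcave, and $C \defeq \widetilde W - \Int \widetilde W'$ is the required collar.

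For the contact statement, suppose $L$ is the positive transverse push-off of a Legendrian knot $K \subset \Bd W$. The attachment of Proposition \ref{attach} is a topological $2$-handle attachment along $\widetilde L$, isotopic to $L$ and hence to $K$, whose framing is the integer $n$ from $g_n(z,x) = (z, xz^n)$ and is realized by the parallel copy $\widetilde L'_n$; on $\Bd \widetilde W' = \Bd \widetilde W$ this performs a Dehn surgery on $K$ whose coefficient, measured against the contact framing, I read off from $n$ together with the orientation-reversing nature of the gluing. Because $L$ is the positive push-off of $K$, suitable $n = n_\pm$ make this topological framing equal to $\mathrm{tb}(K) \pm 1$, the framings of the two contact $(\pm 1)$-surgeries. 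To identify the induced contact structure with the contact surgery result I invoke the package recalled in Section \ref{def}: the surgery torus is convex with dividing set fixed by the framing (Giroux flexibility \cite{Gi91}), and by Honda's classification \cite{Ho00} the glued-in solid torus carries the unique tight contact structure extending $\xi$ on the complement of $\nu K$, which is exactly contact $(\pm 1)$-surgery. Hence $\Bd \widetilde W'$ carries the structures obtained from $\xi$ by Legendrian surgery, and both contact $(\pm 1)$-surgeries are attained.

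The step I expect to be the main obstacle is the quantitative confinement in the second paragraph: controlling, uniformly in the large parameter $a$, the region where $\phi'$ and $\psi_a$ are comparable, so that $\Phi$ mixes them only on the overlap where both are genuinely defined and plurisuperharmonic, while checking that $-\kappa$ is a regular value and $\{\Phi = -\kappa\}$ is a single smooth corner-free hypersurface. The framing bookkeeping of the third paragraph, in particular fixing the sign of the $\mathrm{tb}(K) \pm 1$ shift through the orientation-reversing gluing, is the other point demanding care, though it reduces to the cited results once set up.
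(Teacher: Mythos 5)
Your analytic construction coincides in substance with the paper's: the paper also glues $\phi$ and $\psi_a$ by a pointwise minimum on the overlap (defined piecewise on $U'=U-H_a(c)$ and $V=H_a(0,c)$, after arranging that the attaching region lands in the collar $U$ and that the level sets of $\psi_a$ are transverse to those of $\phi$ for $a$ large) and then smooths; it uses Richberg's theorem to approximate $\min(\phi,\psi_a)$ from below by a smooth strictly plurisuperharmonic $\widetilde\lambda$, where you use the regularized minimum --- a cosmetic difference, both standard. The obstacle you flag at the end (absence of critical points, so that the chosen level is a smooth corner-free hypersurface) is resolved in the paper by a device you should import: transversality of the core disk $D$ to $\Bd W$ yields a vector field $X$ with $X\phi>0$ and $X\psi_a>0$, and by Corollary 3.20 in \cite{CE12} the smoothing can be arranged to satisfy $X\widetilde\lambda>0$. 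With your $\mathrm{rmin}_\delta$ this is even more immediate, since $d\,\mathrm{rmin}_\delta(\phi',\psi_a)$ is pointwise a convex combination of $d\phi'$ and $d\psi_a$, whence $X\Phi>0$ on the transition region and every value in the relevant range is regular; so this half of your argument closes.

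The genuine gap is in the contact-geometric half. You write that by Honda's classification ``the glued-in solid torus carries the unique tight contact structure extending $\xi$ \dots which is exactly contact $(\pm1)$-surgery,'' but Honda's uniqueness theorem identifies the surgered structure only \emph{after} one knows that the contact structure actually induced on the new solid torus $\Bd\widetilde W'\cap H_a$ is tight: an overtwisted structure on the solid torus with the same convex boundary and dividing set would glue equally well and would not be the contact surgery result. As stated, your appeal to uniqueness is circular. The paper supplies the missing tightness in one line: $\Bd\widetilde W'\cap H_a$ is a hypersurface in $H_a\subset\C^2$, so its induced contact structure embeds in the standard (tight) structure and is therefore tight by Bennequin's theorem. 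A second, smaller omission: to conclude that the surgery is localized on $K$ one should, as the paper does, choose a standard neighborhood $N_K$ of $K$ (convex boundary with two dividing curves) containing the tube $N$ around the push-off $L$ from Proposition \ref{attach}, so that the handle attachment leaves the contact structure unchanged outside $N_K$. Your framing bookkeeping --- $n$ arbitrary, hence topological framings $\mathrm{tb}(K)\pm1$ and both contact $(\pm1)$-surgeries realizable --- matches the paper once these two points are patched.
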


\begin{proof}
Since the boundary $\Bd W$ is strongly pseudoconcave, 
there exists a strictly plurisuperharmonic function $\phi$ 
on a tubular neighborhood $U$ of $L$ in $W$ such that 
$\phi ^{-1}(0) \subset \Bd W$ is a solid torus around $L$ and $\phi (U)=\mathopen]-c,0\mathclose]$, for some $c>0$. We can assume that the attaching region of $H_a$ is mapped inside $U$ by the attaching biholomorphism, for any sufficiently large $a >0$.
For $c$ small enough, the core disk $D$ of $H_a$ is transversal to the level sets of $\phi$. 
Hence, we can take $a>0$ large enough so that the level sets of $\psi_a$ are transversal to the level sets of $\phi$, where $\psi_a$ is the function defined in Proposition \ref{psi/thm}.

Now, we take domains $U'=U-H_a(c)$ and $V=H_a(0,c)$ in $H_a$, 
and define a function $\lambda$ by 
\begin{align*}
{\lambda } =
\begin{cases}
\min(\phi, \psi_a) & \text{on } U'\cap V, \\
\phi & \text{on } U'-V, \\
\psi_a  & \text{on } V-U'. 
\end{cases}
\end{align*}
Then, $\lambda$ is continuous and strictly plurisuperharmonic. 
It is indeed continuous since  
\begin{align*}
\min(\phi , \psi )=
\begin{cases}
\psi_a  & \text{ on } \phi^{-1}(0)\cap V, \\
\phi  & \text{ on } \psi_a^{-1}(0)\cap U'.  
\end{cases}
\end{align*}
By Richberg's Theorem \cite{Ri}, it can be approximated from below 
by a smooth strictly plurisuperharmonic function $\widetilde\lambda$ 
which coincides with $\lambda$ away from a neighborhood of the set $\left\{\phi =\psi_a \right\}$. 
Since the holomorphic disk $D$ is transversal to the boundary $\Bd W$, 
there exists a vector field $X$ such that $X\phi >0$ and $X\psi_a >0$. 
By Corollary 3.20 in \cite{CE12}, 
$\widetilde{\lambda}$ can be arranged to satisfy $X\widetilde{\lambda}>0$. 
Hence, we may assume that $\widetilde{\lambda}$ has no critical points. 
Then, for any $d \in \mathopen]0, c \mathclose[$, we may take  $C = \widetilde{\lambda }^{-1}[-d, 0]$ and so
\[\widetilde W'=\widetilde W-\Int C\] 
is a complex surface with strongly pseudoconcave smooth boundary 
$\Bd \widetilde W'=\widetilde{\lambda }^{-1}(-d)$. 

Now, let $\eta $ be the induced contact structure on the concave boundary $\Bd \widetilde W'$, and let the transverse knot $L$ be a positive push-off of a Legendrian knot $K$. 
Then the contact manifold $(\Bd \widetilde W', \eta )$ is obtained from $(\Bd W, \xi )$ 
by some Legendrian contact surgery on $K$. This is proved by the following argument. 
First we take a standard neighborhood $N_K$ of $K$ 
that is a contact solid torus bounded by a convex torus with two parallel dividing curves. 
Since it contains the positive push-off $L$, 
we can also take the neighborhood $N$ of $L$ as in the proof of Proposition \ref{attach} inside $N_K$. 
Hence, the holomorphic handle attaching does not change the contact structure outside $N_K$. 
Moreover, the contact solid torus $\Bd \widetilde W'\cap H_a$ is tight since it is a hypersurface in $H_a\subset \C^2$. 
Therefore, $(\Bd \widetilde W', \eta )$ is the result of a Legendrian surgery on $K$. 

Recall now that the integer $n$ of the embedding $g_n$ in the proof of Proposition \ref{attach}, 
determines the attaching framing of the $2$-handle $H_a$. 
Since the framing can be arbitrary, we can realize both contact $(\pm 1)$-surgeries on $K$. 
\end{proof}

By putting $Y=\widetilde W'$, Theorem \ref{cobordism} immediately follows from Proposition \ref{concave boundary}.

\begin{remark}
Transversality of $L$ to $\xi $ is essential for taking an arbitrary attaching framing and also for modifying the original strictly plurisuperharmonic function. 
\end{remark}

\begin{remark}
The holomorphic handle attaching to the concave boundary cannot be achieved by a Weinstein handle. 
In the proof of Proposition \ref{attach}, the gluing domain is a small neighborhood of $A_{\epsilon }$. 
Consequently, the attaching $2$-handle must be thin enough. 
Hence, a Weinstein handle is useless for holomorphic handle attaching to the concave boundary 
since a thin Weinstein handle is a convex handle. 
\end{remark}

\section{Complex cobordisms and concave holomorphic fillings}\label{proofs}

In this section, we prove Theorem \ref{main2} and Theorem \ref{main}. 
The proof is a simple combination of our holomorphic handle attaching method and Ding-Geiges' result. 
The next proposition is a direct corollary to Theorem \ref{cobordism}. 

\begin{proposition}\label{concave filling}
Let $(M, \xi)$ be a closed positive contact $3$-manifold and let $K$ be a Legendrian knot in $(M, \xi)$. 
If $(M, \xi)$ admits a concave holomorphic filling, then the same holds for the contact manifolds $({M_K}^{\pm}, {\xi_K}^{\pm})$ obtained from $(M, \xi)$ by $(\pm1)$-contact surgery along $K$. 
\end{proposition}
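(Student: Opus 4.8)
The plan is to derive this as a direct application of Theorem \ref{cobordism}. Unwinding the definitions, a concave holomorphic filling of $(M,\xi)$ is a compact complex surface $W$ whose boundary $(\Bd W, \hT \Bd W)$ is strongly pseudoconcave and contactomorphic to $(-M,\xi)$. Thus $W$ is precisely the kind of object to which Theorem \ref{cobordism} applies: a complex surface with strongly pseudoconcave contact boundary. Fixing a contactomorphism $\Phi \colon (\Bd W, \hT \Bd W) \to (-M,\xi)$, I would transport the given Legendrian knot $K \subset M$ to a Legendrian knot $\Phi^{-1}(K) \subset \Bd W$; note that $K$ is Legendrian for $(-M,\xi)$ as well, since being Legendrian depends only on the plane field $\xi$ and not on the orientation of the ambient $3$-manifold.

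Next I would apply Theorem \ref{cobordism} to $W$ and $\Phi^{-1}(K)$, once for each sign. For each choice of sign, let $(N^\pm, \eta^\pm)$ denote the result of the corresponding $(\pm 1)$-contact surgery on $(\Bd W, \hT \Bd W)$ along $\Phi^{-1}(K)$; the theorem then produces a concave holomorphic filling $Y^\pm$ of $(-N^\pm, \eta^\pm)$. Because $\Phi$ is a contactomorphism, it carries the contact framing to the contact framing and respects the coorientation, so that $(N^\pm, \eta^\pm)$ is contactomorphic to the result of $(\pm 1)$-contact surgery on $(-M,\xi)$ along $K$.

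It then remains to identify $(-N^\pm, \eta^\pm)$ with the manifolds $(M_K^\mp, \xi_K^\mp)$. Here I would invoke the standard orientation-reversal identity for Dehn surgery, $-\bigl(M_{p/q}(K)\bigr) \cong (-M)_{-p/q}(K)$, applied with the topological coefficient $\pm 1$ taken relative to the contact framing. Reversing the orientation of the surgered manifold therefore flips the sign of the surgery, so that $(-N^\pm, \eta^\pm)$ is contactomorphic to $(M_K^\mp, \xi_K^\mp)$; that the full contact data (and not merely the underlying oriented $3$-manifold) match follows because contact $(\pm 1)$-surgery is a purely local modification whose glued-in solid torus carries the unique tight extension, which is respected by this orientation correspondence. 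Hence $Y^+$ is a concave holomorphic filling of $(M_K^-, \xi_K^-)$ and $Y^-$ is one of $(M_K^+, \xi_K^+)$, which is the assertion for both signs.

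The analytic content is entirely absorbed into Theorem \ref{cobordism}, so the only genuine work is the orientation and surgery-sign bookkeeping of the last paragraph, which is where I expect the main (though modest) obstacle to lie. In fact, even this can be largely sidestepped: since Theorem \ref{cobordism} realizes both signs of contact surgery on $\Bd W$, both $(M_K^+, \xi_K^+)$ and $(M_K^-, \xi_K^-)$ occur among $\{(-N^+,\eta^+),\,(-N^-,\eta^-)\}$ regardless of how the orientation reversal pairs the two signs, so the precise correspondence is not strictly needed for the conclusion.
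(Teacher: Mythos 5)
Your proposal is correct and follows exactly the paper's route: the paper gives no separate proof, stating the proposition as ``a direct corollary to Theorem \ref{cobordism}''. Your orientation and surgery-sign bookkeeping (including the observation that $(-N^{\pm},\eta^{\pm})\cong(M_K^{\mp},\xi_K^{\mp})$, and the fallback that both signs occur in any case) simply makes explicit what the paper leaves implicit.
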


\begin{lemma}\label{Kähler/thm}
Let $W$ be a Kähler complex surface with strongly pseudoconcave boundary, 
and let $\widetilde W$ be obtained from $W$ by attaching a holomorphic 2-handle $H_a$. 
Then, if $a$ is large enough, the Kähler form of $W$ can be extended over $H_a$.
\end{lemma}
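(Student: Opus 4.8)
The plan is to produce the extended Kähler form as $i\partial\bar\partial$ of an explicit strictly plurisubharmonic potential on the handle, arranged to match the Kähler form of $W$ near the region where $H_a$ is glued. Since $W$ is Kähler, its Kähler form $\omega$ is a closed positive $(1,1)$-form; restricting to a small coordinate ball meeting the attaching region and applying the local $\partial\bar\partial$-lemma, I would write $\omega = i\partial\bar\partial\varphi$ for a smooth strictly plurisubharmonic $\varphi$ defined near the image of the attaching region. Pulling back by the attaching biholomorphism of Proposition \ref{attach} (plurisubharmonicity and $i\partial\bar\partial$ being preserved by holomorphic maps), I obtain a strictly plurisubharmonic $\varphi_0$ on a neighbourhood in $\C^2$ of the attaching region $\{1\le|z_1|\le 1+a^{-1}\}$ of $H_a$. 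The whole problem then reduces to extending $\varphi_0$ to a smooth strictly plurisubharmonic $\widetilde\varphi$ on all of $H_a$ that agrees with $\varphi_0$ near the attaching region: the form $i\partial\bar\partial\widetilde\varphi$ is then Kähler on $H_a$ and coincides with $\omega$ along the gluing, so the two pieces glue to a global Kähler form on $\widetilde W$.

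The extension will be built from the behaviour of $\varphi_0$ along the holomorphic core disk $D=\{z_2=0\}\cap H_a$, exploiting that for large $a$ the handle is extremely thin transverse to $D$: indeed $H_a\subset\{|z_1|\le 1+a^{-1}\}\times\{|z_2|\le\exp(a^{-1}-a)\}$, so $|z_2|\le\rho_a$ on $H_a$ with $\rho_a\to 0$ as $a\to\infty$. The restriction $s(z_1)\defeq\varphi_0(z_1,0)$ is strictly subharmonic on the annulus where it is defined, being the restriction of a strictly plurisubharmonic function to a holomorphic disk. I would first extend $s$ to a strictly subharmonic $\widetilde s$ on the full core disk $\{|z_1|\le 1+a^{-1}\}$ agreeing with $s$ near $|z_1|=1+a^{-1}$, which is the elementary one-variable fact that a strictly subharmonic function on a collar of the boundary circle extends strictly subharmonically over the disk. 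I would then set $\Phi=\widetilde s(z_1)+2\Re\!\big(b(z_1)\,z_2\big)+N|z_2|^2$ on the inner part of the handle, where $b(z_1)=\partial\varphi_0/\partial z_2\,(z_1,0)$ (extended smoothly over the core) records the first-order behaviour of $\varphi_0$ transverse to $D$, and $N$ is a fixed large constant. For $N$ large enough, depending only on the fixed $C^2$-size of $\varphi$ on $W$ and not on $a$, the complex Hessian of $\Phi$ is positive definite, its diagonal $z_2\bar z_2$-entry $N$ dominating the off-diagonal contributions coming from $b$.

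Finally I would interpolate, setting $\widetilde\varphi=\chi\varphi_0+(1-\chi)\Phi$ with a cutoff $\chi(|z_1|)$ supported in a fixed thin annulus lying strictly inside $\{|z_1|<1\}$; this equals $\varphi_0$ on the attaching region and $\Phi$ in the core, hence is a genuine extension of $\varphi_0$. The step I expect to carry the argument is the control of the interpolation error. By construction $\Phi$ matches $\varphi_0$ to first order along $D$, so on the overlap $u\defeq\varphi_0-\Phi=O(|z_2|^2)$ with $\nabla u=O(|z_2|)$, whence the complex Hessian of $\widetilde\varphi$ equals $\chi\,H_{\varphi_0}+(1-\chi)H_\Phi$ plus cross terms of size $O(|\nabla\chi|\,|\nabla u|+|u|)=O(\rho_a)$. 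Since the main part is bounded below by a positive constant independent of $a$, while the cross terms are $O(\rho_a)$, strict plurisubharmonicity of $\widetilde\varphi$ holds once $a$ is large enough, which is precisely the thinness hypothesis.

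The main obstacle is exactly this error estimate, and it is worth emphasising where the subtlety lies: one must match $\varphi_0$ to first order \emph{transverse} to the core, not merely in value, so that the cutoff cross terms are killed by the thinness $\rho_a\to 0$. A naive interpolation toward a large quadratic potential $C|z|^2$ fails, because the attaching region, where the form must be matched, is precisely where such a potential and its gradient are largest, so the cross terms there are $O(C)$ rather than $O(\rho_a)$. Once $\widetilde\varphi$ is obtained, transporting $i\partial\bar\partial\widetilde\varphi$ back through the attaching biholomorphism and gluing it with $\omega$ yields the required extension of the Kähler form of $W$ over $H_a$.
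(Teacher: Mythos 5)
Your overall architecture (a potential $\varphi_0$ near the attaching region, a jet-matched model $\Phi=\widetilde s(z_1)+2\Re(b(z_1)z_2)+N|z_2|^2$ on the core, and a cutoff in $|z_1|$ whose cross terms are killed by the thinness $\rho_a\to 0$ of the handle) is coherent and genuinely different from the paper's proof, which never matches potentials to first order along the core. But there is one genuine gap, and it sits exactly at the point the paper's construction is engineered to handle: the ``elementary one-variable fact'' you invoke is \emph{false}. A strictly subharmonic function on a collar of the boundary circle need not extend subharmonically over the disk so as to agree with itself on an outer annulus (and note you actually need agreement on the \emph{interior} transition annulus where $\chi'\neq 0$, since the estimate $u=\varphi_0-\Phi=O(|z_2|^2)$ requires $\widetilde s=s$ and $b=\partial\varphi_0/\partial z_2(\cdot,0)$ there, not merely near $|z_1|=1+a^{-1}$). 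The obstruction is the monotonicity of circle averages: if $u$ is subharmonic on a disk, then $m_u(r)=\frac{1}{2\pi}\int_0^{2\pi}u(re^{i\theta})\,d\theta$ is convex in $\log r$ and bounded below near the center by $u(0)$, hence nondecreasing in $r$. So $s(z)=|z|^2-10\log|z|$, which is strictly subharmonic on $\{1/2<|z|<1\}$ with strictly decreasing circle averages, admits no subharmonic extension over the disk agreeing with $s$ on any subannulus. Since your $s=\varphi_0(\cdot,0)$ is the restriction of an essentially arbitrary Kähler potential (potentials are determined only up to pluriharmonic summands), nothing prevents it from being of exactly this radially decreasing type.

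The repair is cheap and preserves your scheme, but it must be made explicit: only the form $i\partial\bar\partial\widetilde\varphi$, not the potential itself, has to match $\omega$ across the gluing, so you may first replace $\varphi_0$ by $\varphi_0+k\log|z_1|$ (legitimate since $z_1\neq 0$ on its domain and $\log|z_1|$ is pluriharmonic, so $i\partial\bar\partial$ is unchanged); for $k$ large this forces rapid radial growth across the collar, after which the extension of $s+k\log|z_1|$ over the disk does exist, e.g.\ as a smoothing of $\max\bigl(s+k\log|z_1|,\,A|z_1|^2-B\bigr)$ for suitable $A,B>0$. This is precisely the mechanism of the paper's proof, which works with $h_1=h+k\log|z_1|$, flattens it via a convex reparametrization $\lambda\circ h_1$, restores strictness by taking a maximum with a small quadratic $b(|z_1|^2+|z_2|^2)$, and smooths by Richberg's theorem — thereby sidestepping any jet matching. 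A secondary, fixable imprecision: the local $\partial\bar\partial$-lemma on a ``small coordinate ball'' does not suffice, since $\widetilde\varphi$ must agree with a potential of $\omega$ on a neighborhood of the \emph{whole} attaching annulus; such a global potential exists because the attaching circle has a Stein tubular neighborhood, which is how the paper begins. With these two repairs, your quantitative interpolation — $u=O(|z_2|^2)$, $\nabla u=O(|z_2|)$, cross terms $O(\rho_a)$ against a uniform positive lower bound for $\chi H_{\varphi_0}+(1-\chi)H_\Phi$ with $N$ chosen large independently of $a$ — goes through and yields a correct, more hands-on alternative to the paper's max-plus-Richberg argument.
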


\begin{proof}
Let $W'\supset W$ be a complex surface without boundary that contains $W$.
The attaching circle of $H_a$ has a Stein tubular neighborhood $U$ in $W'$, hence the Kähler form $\omega$ there admits a Kähler potential $h \: U \to \R$, that is $\omega = i\, \partial\bar\partial h$ in $U$. Clearly, $h$ can be extended a little bit inside the handle $H_a$ as a strictly plurisubharmonic smooth function,  if $a$ is large enough so that the attaching region of $H_a$ is contained in $U$.

In the open subset $V$ of $H_a$ where $h$ is already defined, let $k > 0$ be a constant and take the function $h_1(z_1,z_2)=h(z_1,z_2)+ k\log |z_1|$, which is strictly plurisubharmonic and satisfies $\partial\bar\partial h_1 = \omega$ because $\log |z_1|$ is harmonic. If $k>0$ is large enough then (up to taking a larger value for $a$) the function $h_1$ is strictly increasing with respect to $|z_1|$, in a sufficiently thin region $1-2\epsilon\leq |z_1|\leq 1$, for some $\epsilon >0$. Up to rescaling, we may further assume that $h_1(z_1,z_2)\leq 0$ for $|z_1|\leq 1-2\epsilon$ and $h_1(z_1,z_2)\geq 1$ for $|z_1|\geq 1-\epsilon$.

Let $\lambda \: \R \to \R$ be a smooth function such that $\lambda(x) = 0$ for $x\leq 0$, $\lambda$ is strictly convex (hence strictly increasing) in $\mathopen]0, 1\mathclose[$, and $\lambda =\id$ in $[1, {+\infty})$. Then, $\lambda\circ h_1$ vanishes for $|z_1| \leq 1-2\epsilon$, is strictly plurisubharmonic for $|z_1| > 1-2\epsilon$, and coincides with $h_1$ for $|z_1| \geq 1-\epsilon$. Hence, $\lambda\circ h_1$ may be extended by zero in the region $|z_1| \leq 1-2\epsilon$, yielding a function $h_2\: H_a \to \R$. 

Next let $h_3 \: H_a \to \R$ be the function defined by 
$h_3(z_1, z_2) = b(|z_1|^2 + |z_2|^2)$, where $b>0$ is a constant that may be chosen sufficiently small 
so that the inequality $h_2>h_3$ holds for $|z_1|\geq 1-\epsilon$.
 
Now, we take $h_4 = \max(h_2, h_3)$, which is continuous, strictly plurisubharmonic and coincides with $h_1$ for $|z_1|\geq 1-\epsilon$.
Finally, let $h_5$ be a smooth strictly plurisubharmonic approximation of $h_4$. 
Then, the Kähler form $\omega$ can be extended over $H_a$ by $i\, \partial\bar\partial h_5$.
\end{proof}

We are ready to prove Theorem \ref{main2} and Theorem \ref{main}. 

\begin{proof}[Proof of Theorem \ref{main2}] 
By removing a strongly pseudoconvex $4$-ball from an arbitrary compact complex surface $S$, 
we obtain a concave holomorphic filling of the standard contact structure on $S^3$. 
Hence, it follows from Theorem \ref{DG} and Proposition \ref{concave filling} that 
any closed positive contact $3$-manifold $(M, \xi )$ admits a concave holomorphic filling. 
Corresponding to infinitely many choices of $S$, 
there are infinitely many different concave holomorphic fillings of $(M, \xi )$. The last statement follows from Lemma \ref{Kähler/thm} in the Kähler case, with $S$ a Kähler surface, and otherwise by starting from a non-Kähler surface $S$ (for example a Hopf surface) in the non-Kähler case. 
\end{proof}

\begin{proof}[Proof of Theorem \ref{main}] 
We start with a Kähler concave holomorphic filling of $(M_2, \xi _2)$. 
By Theorem \ref{DG}, $(M_1, \xi_1)$ can be obtained from $(M_2, \xi_2)$ by a sequence of $(\pm 1)$-surgeries. 
Applying Theorem \ref{cobordism} repeatedly, the sequence can be realized as a complex cobordism. 
Namely, there exists a convex complex cobordism from $(M_1, \xi_1)$ to $(M_2, \xi _2)$.
\end{proof}

\begin{remark}
Even if the complex cobordism obtained by Theorem \ref{main} is Kähler, it is not necessarily a symplectic cobordism. 
For example, let $(M_1, \xi _1)$ be strongly symplectically fillable and $(M_2, \xi _2)$ be overtwisted.  
Then, there is no symplectic cobordism from $(M_1, \xi _1)$ to $(M_2, \xi _2)$ 
since an overtwisted contact $3$-manifold admits no weak symplectic filling. 
Therefore, on a Kähler convex complex cobordism $V$ from $(M_1, \xi _1)$ to $(M_2, \xi _2)$, 
there exists a symplectic form $\omega $ compatible with the complex structure, 
but the concave boundary $N_1$ never be $\omega $-concave, 
namely, there is no Liouville vector field near $N_1$ with respect to any compatible symplectic form. 
\end{remark}

\section*{Acknowledgements}
The first author was supported by the JSPS KAKENHI Grant Number JP17K14193. 

The second author is a member of GNSAGA, Istituto Nazionale di Alta Matematica ``Francesco Severi'', Italy. Moreover, he was partially supported by:
\begin{enumerate}
\item the 2013 ERC Advanced Research Grant 340258 TADMICAMT, Lehrstuhl Mathematik VIII, University of Bayreuth;
\item the project FRA-BEORCHIA-18, n. j961c1800138001, Università degli Studi di Trieste.
\end{enumerate}

\end{document}